\def\tr{\mathop{\text{tr}}\kern.2ex}
\def\E{{\mathbb E}}
\def\ind{{\mathds 1}}
\newcolumntype{L}[1]{>{\raggedright\let\newline\\\arraybackslash\hspace{0pt}}m{#1}}
\newcolumntype{C}[1]{>{  \centering\let\newline\\\arraybackslash\hspace{0pt}}m{#1}}
\newcolumntype{R}[1]{>{ \raggedleft\let\newline\\\arraybackslash\hspace{0pt}}m{#1}}
\newcolumntype{d}[1]{D{.}{.}{#1}}
\newcolumntype{H}{>{\setbox0=\hbox\bgroup}c<{\egroup}@{}}
\newcolumntype{Z}{>{\setbox0=\hbox\bgroup}c<{\egroup}@{\hspace*{-\tabcolsep}}}
\renewcommand{\d}{\textrm{\sf d}}
\numberwithin{equation}{section}
\newtheorem{theorem}{Theorem}[section]
\newtheorem{lemma}{Lemma}[section]
\newtheorem{proposition}{Proposition}[section]
\newtheorem{assumption}{Assumption}[section]
\providecommand{\customgenericname}{}
\newcommand{\newcustomtheorem}[2]{%
  \newenvironment{#1}[1]
  {%
   \renewcommand\customgenericname{#2}%
   \renewcommand\theinnercustomgeneric{##1}%
   \innercustomgeneric
  }
  {\endinnercustomgeneric}
}
\theoremstyle{definition}
\begin{document}

\setlength{\abovedisplayskip}{5pt}
\setlength{\belowdisplayskip}{5pt}
\setlength{\abovedisplayshortskip}{5pt}
\setlength{\belowdisplayshortskip}{5pt}
\hypersetup{colorlinks,breaklinks,urlcolor=blue,linkcolor=blue}

\title{\LARGE Smoothed NPMLEs in nonparametric Poisson mixtures and beyond}

\author{
Keunwoo Lim\thanks{Department of Applied Mathematics, University of Washington, Seattle, WA 98195, USA. E-mail: \tt{kwlim@uw.edu}} ~~and~ Fang Han\thanks{Department of Statistics, University of Washington, Seattle, WA 98195, USA; e-mail: {\tt fanghan@uw.edu}}}

\date{\today}

\maketitle

%\vspace{-2em}

\begin{abstract} 
We discuss nonparametric mixing distribution estimation under the Gaussian-smoothed optimal transport (GOT) distance. It is shown that a recently formulated conjecture---that the Poisson nonparametric maximum likelihood estimator can achieve root-$n$ rate of convergence under the GOT distance---holds up to some logarithmic terms. We also establish the same conclusion for other minimum-distance estimators, and discuss mixture models beyond the Poisson.
\end{abstract}

{\bf Keywords:} GOT distance, nonparametric mixture models, nonparametric maximum likelihood estimators, minimum-distance estimators.

\section{Introduction and main results} \label{sec:intro}

One of the main issues in mixture models pertains to estimating the unknown mixing distribution. This question is well understood for parametric models, for which the problem reduces to point estimation in finite dimensions. Substantial progress has been made in understanding nonparametric mixture models \citep{chen2017consistency}, particularly recently in the cases of Gaussian \citep{wu2020optimal,doss2020optimal} and Poisson \citep{Miao24}. However, many open problems remain in this area; one concerning nonparametric Poisson mixtures is discussed here.

Consider the Poisson mixture model with {\it mixture density function} $h_Q^{\rm Pois}(x)$ given by
\begin{align}\label{eq:poisson-model}
h_Q^{\rm Pois}(x) = \int_{0}^{\theta_{\ast}} {\rm Pois}(x;\theta) \d Q(\theta), ~~x=0,1,2,\ldots.
\end{align}
Here ${\rm Pois}(x;\theta):=e^{-\theta}\theta^x/x!$ is the Poisson probability mass function (PMF), $\theta_\ast>0$ is a {\it known} upper bound of $Q(\cdot)$, and $Q(\cdot)$ represents the {\it unknown mixing distribution} of support in $[0,\theta_\ast]$.  The object of interest is to estimate $Q(\cdot)$ based only on $n$  random integers $X_1,\ldots,X_n$ independently sampled from $h_Q^{\rm Pois}(\cdot)$.

In \cite{Miao24}, the authors studied  the following nonparametric maximum likelihood estimator (NPMLE) of $Q$,
\[
\hat Q_{\rm MLE}:= \argmax_{\tilde Q \text{ over }[0,\theta_\ast]}\sum_{i=1}^n\log h_{\tilde Q}^{\rm Pois}(X_i),
\]
where the maximum is taken over all probability distributions of support over $[0,\theta_\ast]$. They proved, denoting $W_1(\cdot,\cdot)$ the Wasserstein-1 distance,
\[
\sup_{Q \text{ over }[0,\theta_\ast]}\E W_1(Q,\hat Q_{\rm MLE}) \leq C\cdot \frac{\log\log n}{\log n},
\]
with the constant $C=C(\theta_\ast)$ only depending on $\theta_\ast$. The authors also constructed a worst-case example, showing that this bound cannot be further improved in the minimax sense.

More recently, there has been growing interest in quantifying the estimation accuracy of $\hat Q_{\rm MLE}$ using a smoothed optimal transport distance as an alternative to $W_1$. Specifically, let $\cN(0,\sigma^2)$ denote the normal distribution with a fixed variance $\sigma^2>0$ and let ``$\ast$'' represent the convolution operation. Define the Gaussian-smoothed optimal transport (GOT) distance as 
\[
W_1^\sigma(Q,Q'):=W_1\Big(Q\ast \cN(0,\sigma^2),Q'\ast \cN(0,\sigma^2)\Big).
\]
This distance was introduced in recent works \citep{goldfeld2020convergence,goldfeld2020gaussian,Goldfeld24,goldfeld2024limit}. It was shown in \citet[Theorem 7 and Remark 10]{Han23} that 
\begin{align}\label{eq:hms23}
\sup_{Q \text{ over }[0,\theta_\ast]}\E W_1^\sigma\Big(Q,\hat Q_{\rm MLE}\Big) \leq C(\theta_\ast, \sigma^{2},c)n^{-1/10+c},
\end{align}
where $c>0$ can be chosen arbitrarily small. Clearly, the subpolynomial rate under the $W_1$ distance is accelerated exponentially under $W_1^{\sigma}$.

Unfortunately, the $O(n^{-1/10})$-rate of convergence is not satisfactory. In the same paper \citep{Han23}, it was conjectured that the bound \eqref{eq:hms23} is loose and, up to some logarithmic terms, should be improvable to the parametric root-$n$ rate. Indeed, the proof of \citet[Theorem 7]{Han23}, based on a Jackson-type approximation bound \citep{jackson1921general} of Gaussian-smoothed Lipschitz functions, is aimed for generic nonparametric mixture distributions. There should be room to leverage improvements specific to the Poisson case.

In a follow-up paper, \cite{Teh23} showed that improvements to \eqref{eq:hms23} can be made by exploiting the special structure of the Poisson. Their Corollary 2.1 improved on \cite{Han23} in a substantial way, proving that, for arbitrarily small $c>0$,
\begin{align}\label{eq:tp23}
\sup_{Q \text{ over }[0,\theta_\ast]}\E W_1^\sigma\Big(Q,\hat Q_{\rm MLE}\Big) \leq C(\theta_\ast, \sigma^{2},c)n^{-1/4+c}.
\end{align}
%which substantially improves on the original bound.

The main result of this paper states that \cite{Han23}'s conjecture holds up to some logarithmic terms. 

\begin{theorem}\label{thm:main} Under the Poisson mixture model \eqref{eq:poisson-model}, it holds true that
\begin{align*}
    \sup_{Q \text{ over }[0, \theta_{\ast}])}\mathbb{E}W_{1}^{\sigma}(Q, \hat{Q}_{\text{MLE}})=
    \begin{cases}
        O(n^{-1/2+c/\log \log n}\,\textnormal{Polylog}(n)),\,\,\,\,\,&\text{if}\,\,\sigma^{2} <4\theta_{\ast},\\
        O(n^{-1/2}\,\textnormal{Polylog}(n)), &\text{if}\,\,\sigma^{2} \geq 4\theta_{\ast},
    \end{cases}
\end{align*}
with $c = 8 \log (4\theta_{\ast}/\sigma^{2})$. 
\end{theorem}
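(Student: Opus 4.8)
The plan is to pass to a moment representation of the GOT distance that is tailored to the Poisson family, and then to estimate each moment of the mixing distribution at a near-parametric rate. First, by Kantorovich--Rubinstein duality together with the symmetry of the Gaussian kernel $\phi_\sigma$ (the $\cN(0,\sigma^2)$ density),
\[
W_1^\sigma(Q,\hat{Q}_{\mathrm{MLE}})=\sup_{\|f\|_{\mathrm{Lip}}\le 1}\int (f\ast\phi_\sigma)\,\d(Q-\hat{Q}_{\mathrm{MLE}}),
\]
and since $g:=f\ast\phi_\sigma$ is entire, expanding it in its Taylor series on the compact support $[0,\theta_\ast]$ gives $\int g\,\d(Q-\hat Q_{\mathrm{MLE}})=\sum_{k\ge 1}\frac{g^{(k)}(0)}{k!}(m_k(Q)-m_k(\hat Q_{\mathrm{MLE}}))$, where $m_k(\cdot)$ denotes the $k$-th moment. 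Using $g^{(k)}=f'\ast\phi_\sigma^{(k-1)}$ and $\|\phi_\sigma^{(j)}\|_1\le \sigma^{-j}\sqrt{j!}$, the Taylor coefficients obey $|g^{(k)}(0)|/k!\le (\sigma^{k-1}k\sqrt{(k-1)!})^{-1}$ uniformly over all $1$-Lipschitz $f$, whence
\[
W_1^\sigma(Q,\hat Q_{\mathrm{MLE}})\le \sum_{k\ge 1}\frac{|m_k(Q)-m_k(\hat Q_{\mathrm{MLE}})|}{\sigma^{k-1}\,k\,\sqrt{(k-1)!}}.
\]
This would be the backbone of the argument, and it is here that the Poisson structure enters: the falling-factorial identity $\sum_{x\ge 0}x^{(k)}\,h_Q^{\mathrm{Pois}}(x)=m_k(Q)$ expresses each moment as an explicit linear functional of the mixture PMF.

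Next I would truncate the series at a cutoff $D$ to be chosen. For $k>D$ both moments are bounded by $\theta_\ast^k$, so the tail is at most $2\sum_{k>D}\theta_\ast^k/(\sigma^{k-1}k\sqrt{(k-1)!})$, which is super-geometrically small, of order $\exp(-\tfrac12 D\log D+O(D))$, and therefore negligible as soon as $D\asymp \log n/\log\log n$. For $k\le D$, I would write $m_k(Q)-m_k(\hat Q_{\mathrm{MLE}})=(m_k(Q)-\hat m_k)+(\hat m_k-m_k(\hat Q_{\mathrm{MLE}}))$ with $\hat m_k:=n^{-1}\sum_{i=1}^n X_i^{(k)}$ the empirical factorial moment. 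The first summand is a centered sample average whose mean absolute deviation is at most $n^{-1/2}\sqrt{\E_Q[(X^{(k)})^2]}$, and the decisive computation is that, uniformly over $\theta\le\theta_\ast$,
\[
\E_\theta\big[(X^{(k)})^2\big]=k!\,\theta^k\sum_{i=0}^k\binom{k}{i}\frac{\theta^i}{i!}\le k!\binom{2k}{k}\theta^k\le k!\,(4\theta)^k,
\]
where the bound $\binom{2k}{k}\le 4^k$ is exactly what manufactures the base $4\theta_\ast$, and hence the threshold, in the final rate.

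The second summand $\hat m_k-m_k(\hat Q_{\mathrm{MLE}})=\sum_x x^{(k)}(\hat h(x)-h_{\hat Q_{\mathrm{MLE}}}^{\mathrm{Pois}}(x))$ measures how faithfully the fitted PMF reproduces the empirical factorial moments; by Cauchy--Schwarz it is at most $\sqrt{\E_{\hat Q_{\mathrm{MLE}}}[(X^{(k)})^2]}\cdot\sqrt{\chi^2(\hat h\,\|\,h_{\hat Q_{\mathrm{MLE}}}^{\mathrm{Pois}})}$, so it carries the same base $4\theta_\ast$ provided the NPMLE attains a near-parametric fit $\chi^2(\hat h\,\|\,h_{\hat Q_{\mathrm{MLE}}}^{\mathrm{Pois}})=O(\mathrm{Polylog}(n)/n)$. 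Establishing this fit bound --- by exploiting the minuscule metric entropy of the real-analytic family $\{h_Q^{\mathrm{Pois}}\}_{Q}$ over the bounded interval $[0,\theta_\ast]$, in the spirit of \citep{Miao24,Han23,Teh23} --- is the step I expect to be the main obstacle, since it is where the empirical-process analysis is concentrated. Granting it, the $k\le D$ contribution is bounded by a constant multiple of $n^{-1/2}\,\mathrm{Polylog}(n)\sum_{k\le D}k^{-1/2}(4\theta_\ast/\sigma^2)^{k/2}$. When $\sigma^2\ge 4\theta_\ast$ the geometric factor is at most one and the sum is $O(\sqrt{D})=\mathrm{Polylog}(n)$, giving the clean $n^{-1/2}\,\mathrm{Polylog}(n)$ rate; when $\sigma^2<4\theta_\ast$ the sum is dominated by its last term $(4\theta_\ast/\sigma^2)^{D/2}$, and taking $D=\lceil 16\log n/\log\log n\rceil$ (which simultaneously renders the tail negligible) yields $n^{-1/2+c/\log\log n}\,\mathrm{Polylog}(n)$ with $c=8\log(4\theta_\ast/\sigma^2)$, as claimed.
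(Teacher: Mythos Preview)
Your moment-based route is a genuine alternative to the paper's argument and, once repaired at one point, recovers the same rate with the same threshold. The paper does not pass through moments: it Taylor-expands $v(\theta)=e^{\theta}(\ell_\sigma(\theta)-\ell_\sigma(0))$ so that $\ell_\sigma(\theta)-\ell_\sigma(0)\approx\sum_{x<k}b_x\,\mathrm{Pois}(x;\theta)$ with explicit coefficient control (Lemma~\ref{lemma: polynomial}), and then bounds $\sum_x b_x\,(h_Q(x)-h_{\hat Q}(x))$ by a Hellinger-type Cauchy--Schwarz. Your decomposition replaces the PMF differences $h_Q(x)-h_{\hat Q}(x)$ by moment differences $m_k(Q)-m_k(\hat Q)$ and links the two via the falling-factorial identity. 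It is pleasant that in your version the base $4\theta_\ast$ emerges transparently from the second-factorial-moment computation $\E_\theta[(X^{(k)})^2]=k!\,\theta^k\sum_{i\le k}\binom{k}{i}\theta^i/i!\le C(\theta_\ast)\,k!(4\theta_\ast)^k$, whereas in the paper it arises from combining the factor $2^x$ in the coefficient bound of Lemma~\ref{lemma: polynomial} with $h_Q(x)\le\theta_\ast^x/x!$. (A small caveat: your intermediate inequality $\sum_{i}\binom{k}{i}\theta^i/i!\le\binom{2k}{k}$ is only literal for $\theta\le1$; for general $\theta\le\theta_\ast$ one needs a harmless constant $C(\theta_\ast)$, which does not affect the conclusion.)

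The real gap is the fit step. You ask for $\chi^2(\hat h\,\|\,h_{\hat Q_{\mathrm{MLE}}})=O(\mathrm{Polylog}(n)/n)$, but NPMLE optimality only yields $\mathrm{KL}(\hat h\,\|\,h_{\hat Q_{\mathrm{MLE}}})\le\mathrm{KL}(\hat h\,\|\,h_Q)$, and there is no passage from small KL to small $\chi^2$; nor does a metric-entropy argument naturally deliver a $\chi^2$ bound with the \emph{random} $h_{\hat Q_{\mathrm{MLE}}}$ in the denominator. The paper sidesteps this entirely by using a Hellinger Cauchy--Schwarz,
\[
\Big|\sum_x b_x\big(\hat h(x)-h_{\hat Q}(x)\big)\Big|\le\Big(\sum_x b_x^2\big(\sqrt{\hat h(x)}+\sqrt{h_{\hat Q}(x)}\,\big)^2\Big)^{1/2}\cdot\sqrt{2}\,H(\hat h,h_{\hat Q}),
\]
and then chaining $H^2(\hat h,h_{\hat Q})\le\tfrac12\mathrm{KL}(\hat h\,\|\,h_{\hat Q})\le\tfrac12\mathrm{KL}(\hat h\,\|\,h_Q)\le\tfrac12\chi^2(\hat h\,\|\,h_Q)$. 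Thus the only probabilistic input required is a $\chi^2$ bound against the \emph{true} $h_Q$, which is obtained by a direct H\"older/Markov argument (Lemma~\ref{lemma: chi-square}) with no empirical-process machinery at all. Your proof goes through verbatim under the same substitution: replace your $\chi^2$-Cauchy--Schwarz for $\hat m_k-m_k(\hat Q_{\mathrm{MLE}})$ by the Hellinger version, bound the weight $\sum_x(x^{(k)})^2(\hat h(x)+h_{\hat Q_{\mathrm{MLE}}}(x))$ by $C(\theta_\ast)\,k!(4\theta_\ast)^k$ (deterministically for the $h_{\hat Q_{\mathrm{MLE}}}$ part, via Markov for the $\hat h$ part), and invoke $H^2(\hat h,h_{\hat Q_{\mathrm{MLE}}})\le\tfrac12\chi^2(\hat h\,\|\,h_Q)=O(\mathrm{Polylog}(n)/n)$.
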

Similar to \cite{Teh23}, the above result is viable by making use of the structure of the Poisson distribution, for which we introduce a new approximation lemma tailored specifically (Lemma \ref{lemma: polynomial}). This new approximation bound turns out to be sufficiently powerful to further sharpen the analysis in \eqref{eq:tp23}, and indicates further that the comparison theorem in \citet[Theorem 1]{Teh23} could be loose. The corresponding bound in Theorem \ref{thm:main} is also interestingly similar to the remarkable lower bound established in \citet[Theorem 21]{polyanskiy2021sharp}; both are nearly parametric but  ${\rm Polylog}(n)$ away.

Subsequent sections further extend the main result to a class of minimum-distance estimators that include the NPMLE as a special example. Additionally, in a less satisfactory way, these results are extended to the discrete exponential family, including the Poisson as a special case. The presentation of these results is deferred to the following sections.

\paragraph{Notation.} \label{notation}
Denote $\mathbb{R}$, $\mathbb{R}^{>0}$, $\mathbb{R}^{\geq 0}$, $\mathbb{Z}^{\geq 0}$ as the sets of all real numbers, of all positive real numbers, of all nonnegative real numbers, and of all nonnegative integers, respectively. Let $\mathcal{P}(A)$ be the set of all probability measures on the domain $A\subset\mathbb{R}$. Let $\ind(\cdot)$ represent the indicator function.

\section{Models and methods}\label{section:models}

Starting from this section, we consider a more general setting than the Poisson and assume the following nonparametric mixture model:
\begin{align}\label{eq:model-general}
    h_{Q}(x) = \int_{0}^{\theta_{\ast}} f(x; \theta) \d Q(\theta), ~~~ x\in \mathbb{Z}^{\geq 0}.
\end{align}
Here $f(\cdot; \theta)$ is a {\it known} parametric PMF, not necessarily the Poisson, and $Q(\cdot)$ is the {\it unknown} mixing distribution. Let $X_1,\ldots,X_n$ be independently drawn from $h_Q$. 

In this paper we further require $f(x;\theta)$ to belong to the {\it discrete exponential family} \citep{Zhang95, Han23}, for which $f(x;\theta)$ admits a special decomposition form,
\begin{align}\label{eq:model-general2}
    f(x; \theta) = g(\theta)w(x)\theta^{x}, ~~~ x\in \mathbb{Z}^{\geq 0} ~~{\rm and}~~\theta\in[0, \theta_{\ast}].
\end{align}
It is additionally assumed that $g(\theta)$ is analytic in the neighborhood of $0$, and $0 \leq \theta_{\ast}<\theta_{c}$ where $\theta_{c}$ is the radius of convergence of $\sum_{x\geq 0} w(x)\theta^{x}$. It can be shown that Poisson distribution satisfies all the above conditions and thus is a special case.

Next, we introduce the minimum-distance estimators of $Q$. To this end, let's consider a generalized distance $d$ of any $p, q\in \mathcal{P}(\mathbb{Z}^{\geq 0})$ as a mapping
\[
d(p \Vert q) \colon \mathcal{P}(\mathbb{Z}^{\geq 0})\times\mathcal{P}(\mathbb{Z}^{\geq 0})\to \mathbb{R}^{\geq 0}, 
\]
such that $d(p \Vert q) = 0$ if and only if $p = q$. Wolfowitz's minimum-distance estimators, introduced in \cite{Wolfowitz53, Wolfowitz57},  aim to minimize the distance between the observed distribution and estimated distribution, measured by $d(\cdot,\cdot)$, as
\begin{align}\label{eq:minimizer}
\hat Q_d \in \argmin_{\tilde Q\in \cP([0,\theta_\ast])}d(h^{\rm obs} \Vert h_{\tilde Q}).
\end{align}
Here the observed empirical distribution $h^{\rm obs}(\cdot)$ is given by
\[
    h^{\text{obs}}(x) = \frac1n \sum_{i = 1}^{n}\ind(X_{i} = x),~~~\text{ for all }x\in\mathbb{Z}^{\geq 0}.
\]
In the special case when we choose $d$ to be the Kullback–Leibler divergence
\[
d(p\Vert q)= {\rm KL}(p\Vert q):=\sum_{x\geq 0} p(x) \log \frac{p(x)}{q(x)},
\]
the corresponding minimum-distance estimator $\hat Q_{\rm KL}$ is well known to be equivalent to the NPMLE
\[
\hat{Q}_{\text{MLE}} = \argmax_{\tilde Q\in\mathcal{P}([0, \theta_{\ast}])}\frac{1}{n}\sum_{i = 1}^{n} \log h_{\tilde Q}(X_{i}).
\]
Thusly, the NPMLE belongs to the general family of minimum-distance estimators.

Lastly, consider the computation of $\hat Q_d$. To this end, an additional structure for $d(\cdot,\cdot)$ is needed. 

\begin{assumption}\label{assumption: structure} There exist maps $w \colon \mathcal{P}(\mathbb{Z}^{\geq 0})\to \mathbb{R} $ and $\phi\colon \mathbb{R}^{\geq 0}\times \mathbb{R}^{\geq 0}\to \mathbb{R}$ such that the generalized distance $d(p \Vert q)$ can be decomposed as 
\begin{align}\label{eq:distance form}
    d(p \Vert q) = w(p) +\sum_{x \geq 0}\phi(p(x), q(x)).
\end{align}
In addition, assume $\phi$ to be continuously differentiable, $\phi(0,y_{2}) = 0$ for any $y_2\geq 0$, and $\phi(y_{1}, y_{2})$ is a strictly convex function of $y_{2}$ for any fixed $y_{1}>0$.
\end{assumption}

Let the set of values of the observations be $\{i_{1},\dots,i_{q}\}$ and the number of observations with value $i_{x}$ as $a_{x} = \sum_{s = 1}^{n}\ind(X_{s} = i_{x})$. In addition, let $\alpha_{x} = a_{x}/n$. The empirical distribution $h^{\text{obs}}$ can then be written as $h^{\text{obs}}(i_{x}) = \alpha_{x}$ and zero elsewhere. Define 
\[
\mu_{x}(G) = \int_{0}^{\theta_{\ast}} f(i_{x}; \theta)\, \d G(\theta) ~~{\rm and}~~ \mu(G) = (\mu_{1}(G),\dots,\mu_{q}(G))^\top. 
\]
Next, introduce 
\[
\Psi(u) = \sum_{x = 1}^{q}\phi(\alpha_{x},u_{x}) ~~{\rm and}~~ \Phi(G) = \Psi \circ \mu(G)=\sum_{x=1}^q\phi(\alpha_x,\mu_x(G)).
\] 
Lastly, let $\delta_{\lambda}$ be a point mass on $\lambda$ and denote the directional derivative $\Phi'(G, P)$ of the probability measure $G\in \mathcal{P}([0, \theta_{\ast}])\backslash \{\delta_{0}\}$ with regard to the direction of the probability measure $P$ as
\begin{align*}
    \Phi'(G, P) = \lim_{\epsilon \to 0^{+}}\frac{\Phi((1-\epsilon)G+\epsilon P) - \Phi(G)}{\epsilon}.
\end{align*}

With these notation, we are then ready to introduce the Vertex Direction Method (VDM, \cite{Simar76}) and Intra Simplex Direction Method (ISDM, \cite{lesperance1992algorithm}) for computing the minimum-distance estimators, outlined in Algorithms \ref{algorithm:vdm} and \ref{algorithm:isdm}.

\begin{algorithm}[!ht]
\caption{Vertex Direction Method (VDM)}  \label{algorithm:vdm}
\LinesNotNumbered
  \KwInput{data points $X_{1},\dots,X_{n}$ and $\lambda_{0}\in (0, \theta_{\ast})$}
  \KwInitialization{initial distribution $G = \delta_{\lambda_{0}}$}
  \While{$\min_{\lambda\in[0, \theta_{\ast}]}\Phi'(G, \delta_{\lambda})< 0$}
  {choose $\lambda\in\argmin_{\lambda\in[0, \theta_{\ast}]}\Phi'(G, \delta_{\lambda})$\\
  choose $\epsilon\in\argmin_{\epsilon\in[0,1]}\Phi((1-\epsilon)G+\epsilon\delta_{\lambda})$\\
  update $G\leftarrow (1-\epsilon)G+\epsilon\delta_{\lambda}$
  }
 \KWReturn{$G$} 
\end{algorithm}

\begin{algorithm}[!ht]
\caption{Intra Simplex Direction Method (ISDM)}\label{algorithm:isdm}
\LinesNotNumbered
  \KwInput{data points $X_{1},\dots,X_{n}$ and $\lambda_{0}\in (0, \theta_{\ast})$}
  \KwInitialization{initial distribution $G = \delta_{\lambda_{0}}$}
  \While{$\min_{\lambda\in[0, \theta_{\ast}]}\Phi'(G, \delta_{\lambda})< 0$}
  {choose local minima $\lambda_{1},\dots,\lambda_{s}$ of $\Phi'(G, \delta_{\lambda})$\\
  choose $(\epsilon_{0},\dots,\epsilon_{s})\in\argmin_{\epsilon_{x}\geq 0, \sum\epsilon_{x} = 1}\Phi(\epsilon_{0}G+\sum_{x=1}^{s}\epsilon_{x}\delta_{\lambda_{x}})$\\
  update $G\leftarrow \epsilon_{0}G+\sum_{x=1}^{s}\epsilon_{x}\delta_{\lambda_{x}}$
  }
 \KWReturn{$G$}
\end{algorithm}

\section{Theory}

To establish our theory for the estimator $\hat Q_d$, we first regularize the generalized distance. To this end, we introduce the following two assumptions, which are analogous to the assumptions in \citet[Section 2]{Jana22}. 

\begin{assumption}\label{assumption: KLbound}
There exist some universal constants $\gamma_1,\gamma_2>0$ such that the generalized distance $d$  satisfies, for any $p,q\in \mathcal{P}(\mathbb{Z}^{\geq 0})$,
\begin{align*}
 \gamma_{1}H^{2}(p,q) \leq d(p \Vert q) \leq \gamma_{2} \text{\rm KL}(p \Vert q),
\end{align*}
where $H^{2}(p,q) = \frac{1}{2}\sum_{x\geq 0}(\,\sqrt{p(x)} - \sqrt{q(x)}\,)^{2}$ is the squared Hellinger distance.
\end{assumption}

In the special case of Poisson mixture, Assumption \ref{assumption: KLbound} can be weakened to the following.

\begin{assumption}\label{assumption: chibound}
There exist some universal constants $\gamma_3,\gamma_4>0$ such that the generalized distance $d$  satisfies, for any $p,q\in \mathcal{P}(\mathbb{Z}^{\geq 0})$,
\begin{align*}
    \gamma_{3}H^{2}(p,q) \leq d(p \Vert q) \leq \gamma_{4} \chi^{2}(p \Vert q),
\end{align*}
where $\chi^{2}(p\Vert q) = \sum_{x\geq 0}(p(x) - q(x))^{2}/q(x)$ is the chi-square divergence.
\end{assumption}

\begin{proposition}\label{proposition: distance}
The squared Hellinger distance, Le Cam distance ($\frac{1}{2}\sum_{x \geq 0}\frac{(p(x) - q(x))^{2}}{p(x)+q(x)}$), Jensen-Shannon divergence (${\rm KL}(p\Vert \frac{p+q}{2})+{\rm KL}(q \Vert \frac{p+q}{2})$), KL-divergence, and chi-square distance all satisfy Assumptions \ref{assumption: structure} and \ref{assumption: chibound}. In addition, all satisfy Assumption \ref{assumption: KLbound} except for the chi-square distance.
\end{proposition}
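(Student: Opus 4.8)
The plan is to treat all five divergences uniformly as $f$-divergences and reduce both the structural decomposition and the comparison inequalities to elementary statements about their convex generators. First I would record that each listed $d$ can be written as $d(p\Vert q)=\sum_{x\geq0}q(x)\,F_d\!\big(p(x)/q(x)\big)$ for an explicit strictly convex generator $F_d$ with $F_d(1)=0$, obtained by substituting $p(x)=t\,q(x)$ into the defining sum: one checks $F_{H^2}(t)=\tfrac12(\sqrt t-1)^2$, $F_{\mathrm{LC}}(t)=\tfrac12(t-1)^2/(t+1)$, $F_{\mathrm{JS}}(t)=t\log\tfrac{2t}{t+1}+\log\tfrac{2}{t+1}$, $F_{\mathrm{KL}}(t)=t\log t$, and $F_{\chi^2}(t)=(t-1)^2$. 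Strict convexity is immediate from $F_d''>0$ on $(0,\infty)$; for instance $F_{\mathrm{JS}}''(t)=1/[t(t+1)]$ and $F_{\mathrm{KL}}''(t)=1/t$.

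For Assumption~\ref{assumption: structure} I would use the perspective function. Setting $\phi_d(y_1,y_2):=y_2F_d(y_1/y_2)-F_d(0)\,y_2$ and $w_d(p):=F_d(0)$ (a constant map, which is admissible), the normalization $\sum_x q(x)=1$ gives $d(p\Vert q)=w_d(p)+\sum_{x\geq0}\phi_d(p(x),q(x))$, so the decomposition holds with $w_d$ depending on $p$ only. By construction $\phi_d(0,y_2)=y_2F_d(0)-F_d(0)y_2=0$, and $\phi_d$ is smooth on $(0,\infty)^2$, which is the region where the directional derivatives in Algorithms~\ref{algorithm:vdm}--\ref{algorithm:isdm} are evaluated (observed atoms have $\alpha_x>0$ and $\mu_x(G)>0$). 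Strict convexity in $y_2$ then follows from the perspective identity $\partial^2_{y_2}\big[y_2F_d(y_1/y_2)\big]=y_1^2F_d''(y_1/y_2)/y_2^3>0$ for $y_1>0$, which is unaffected by the linear term $-F_d(0)y_2$. This settles Assumption~\ref{assumption: structure} for all five simultaneously.

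For the bounds I would invoke the comparison principle that $c_1F_{d_2}\le F_{d_1}\le c_2F_{d_2}$ pointwise implies $c_1 d_2\le d_1\le c_2 d_2$, together with the classical ordering $H^2\le\mathrm{LC}\le2H^2$ and $2H^2\le\mathrm{KL}\le\chi^2$ (the last two from Jensen's inequality and $\log t\le t-1$, respectively). The lower bounds $\gamma_3H^2\le d$ (which also serve as the lower bounds of Assumption~\ref{assumption: KLbound}) are then immediate for $H^2,\mathrm{LC},\mathrm{KL},\chi^2$. The upper bound in Assumption~\ref{assumption: chibound} I would not argue directly: since $\mathrm{KL}\le\chi^2$, any $d$ satisfying $d\le\gamma_2\mathrm{KL}$ automatically obeys $d\le\gamma_2\chi^2$, while $\chi^2\le1\cdot\chi^2$ is trivial. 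The KL upper bounds $d\le\gamma_2\mathrm{KL}$ hold for $H^2$ (from $H^2\le\tfrac12\mathrm{KL}$), for $\mathrm{LC}$ (from $\mathrm{LC}\le2H^2\le\mathrm{KL}$), and trivially for $\mathrm{KL}$. The Jensen--Shannon case is the only one without a one-line domination, so there I would verify $\inf_{t>0}F_{\mathrm{JS}}(t)/F_{H^2}(t)>0$ and $\sup_{t>0}F_{\mathrm{JS}}(t)/F_{\mathrm{KL}}(t)<\infty$ by checking that both ratios are continuous and strictly positive on $(0,\infty)$, have finite positive limits at $t\to0$ and $t\to\infty$, and approach positive finite constants at $t=1$ by the matched quadratic behaviour $F_d(t)\sim\tfrac12F_d''(1)(t-1)^2$.

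Finally, to show that $\chi^2$ fails Assumption~\ref{assumption: KLbound} I would exhibit that no finite $\gamma_2$ works: on $\{0,1\}$ take $p=(a,1-a)$ with fixed $a\in(0,1)$ and $q_\epsilon=(\epsilon,1-\epsilon)$, so that $\chi^2(p\Vert q_\epsilon)\sim a^2/\epsilon$ while $\mathrm{KL}(p\Vert q_\epsilon)\sim a\log(1/\epsilon)$, forcing $\chi^2/\mathrm{KL}\to\infty$ as $\epsilon\to0^+$; at the generator level this is $\sup_{t>0}F_{\chi^2}(t)/F_{\mathrm{KL}}(t)=\infty$, since $(t-1)^2/(t\log t-t+1)\to\infty$ as $t\to\infty$. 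I expect the main obstacle to be precisely the two Jensen--Shannon ratio estimates, since JS is the only generator lacking an elementary pointwise domination by $H^2$ or $\mathrm{KL}$; everything else reduces to the perspective-function computation and the standard $f$-divergence ordering.
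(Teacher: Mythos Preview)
Your proposal is correct, and the strategy is sound, but it diverges from the paper's proof in two places worth flagging.

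For Assumption~\ref{assumption: structure} the paper does not use a unified perspective-function construction; it simply lists explicit pairs $(w,\phi)$ case by case (e.g.\ $w=1$, $\phi=-\sqrt{pq}$ for $H^2$; $w=-1$, $\phi=p^2/q$ for $\chi^2$), citing \citet{Jana22}. Your $f$-divergence treatment is more conceptual and handles all five at once, and you are also more careful than the paper in noting that continuous differentiability is only needed on $(0,\infty)^2$, the region actually visited by the algorithms. For the comparison inequalities the two proofs coincide on $H^2$, $\mathrm{LC}$, $\mathrm{KL}$, $\chi^2$, but differ on Jensen--Shannon: the paper invokes the sandwich $\mathrm{LC}\le\mathrm{JS}\le 2\log 2\cdot\mathrm{LC}$ from \citet{Polyanskiy23}, which immediately reduces JS to the already-handled Le Cam case, whereas you propose a direct ratio analysis of $F_{\mathrm{JS}}/F_{H^2}$ and $F_{\mathrm{JS}}/F_{\mathrm{KL}}$. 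The paper's route is shorter here, and since you identified the JS estimates as your main obstacle, it is worth knowing that this obstacle disappears once the $\mathrm{LC}$--$\mathrm{JS}$ comparison is quoted. Finally, you supply an explicit counterexample showing $\chi^2$ fails the KL upper bound; the paper's proof omits this entirely and only verifies the positive claims.
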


The first theory concerns the convergence of Algorithms \ref{algorithm:vdm} and \ref{algorithm:isdm}.

\begin{theorem}\label{theorem: algorithm}
Assume Model \eqref{eq:model-general} with $f(x;\theta)$ taking the form \eqref{eq:model-general2} and being continuous with regard to $\theta$. Further assume Assumption \ref{assumption: structure} holds. Then, the minimum-distance estimator $\hat{Q}_{d}$ exists, and VDM and ISDM yield a solution path $\Phi(G)\to\Phi(\hat{Q}_{d})$ as the number of iterations increases.    
\end{theorem}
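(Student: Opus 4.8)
The plan is to recognize this as a finite-dimensional convex program and then run a Frank--Wolfe (conditional gradient) argument. First I would exploit that only $q$ distinct values are observed, so that $G\mapsto \mu(G)=(\mu_1(G),\dots,\mu_q(G))^\top$ is an \emph{affine} map from $\mathcal{P}([0,\theta_\ast])$ into $\mathbb{R}^q$, while Assumption \ref{assumption: structure} makes $\Psi(u)=\sum_{x=1}^q\phi(\alpha_x,u_x)$ convex (strictly convex in the coordinates with $\alpha_x>0$) and continuously differentiable. Hence $\Phi=\Psi\circ\mu$ is a convex $C^1$ functional. For existence I would note that $\{(f(i_1;\theta),\dots,f(i_q;\theta))^\top:\theta\in[0,\theta_\ast]\}$ is a compact curve by continuity of $f$ in $\theta$, so its convex hull $M:=\mu(\mathcal{P}([0,\theta_\ast]))$ is compact; the continuous $\Psi$ attains its minimum on $M$, and any $G$ with $\mu(G)$ equal to that minimizer realizes $\hat Q_d$.

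Next I would record the directional-derivative calculus driving both algorithms. Writing $\partial_2\phi$ for the derivative of $\phi$ in its second argument, affineness of $\mu$ gives
\[
\Phi'(G,P)=\sum_{x=1}^q \partial_2\phi\big(\alpha_x,\mu_x(G)\big)\big(\mu_x(P)-\mu_x(G)\big)=\int_0^{\theta_\ast}\Phi'(G,\delta_\lambda)\,\mathrm dP(\lambda),
\]
so that $\min_{P}\Phi'(G,P)=\min_{\lambda\in[0,\theta_\ast]}\Phi'(G,\delta_\lambda)=:-d(G)$ with $d(G)\ge 0$. Convexity of $\Phi$ then yields both the optimality characterization---$G$ minimizes $\Phi$ iff $d(G)=0$, which is precisely the stopping rule of the while-loops---and the suboptimality bound $\Phi(G)-\Phi(\hat Q_d)\le -\Phi'(G,\hat Q_d)\le d(G)$, obtained from $\Phi((1-\epsilon)G+\epsilon\hat Q_d)\le(1-\epsilon)\Phi(G)+\epsilon\Phi(\hat Q_d)$ after dividing by $\epsilon$ and letting $\epsilon\to 0^+$.

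For VDM I would run the descent argument. If the loop terminates then $d(G)=0$ and $G$ is an exact minimizer by the characterization above. Otherwise, at iterate $G_k$ choose $\lambda_k\in\argmin_\lambda\Phi'(G_k,\delta_\lambda)$ and set $v_k=\mu(\delta_{\lambda_k})-\mu(G_k)$, so $\|v_k\|\le D:=\mathrm{diam}(M)$. Restricting attention to the sublevel set $\{\Phi\le\Phi(\delta_{\lambda_0})\}$---on which the iterates never reach $\delta_0$ and stay bounded away from the singular boundary $\{\mu_x=0\}$---the gradient $\nabla\Psi$ is uniformly continuous with some modulus $\omega$. Since the exact line search does at least as well as any fixed step, I would use the integral form
\[
\Phi((1-\epsilon)G_k+\epsilon\delta_{\lambda_k})-\Phi(G_k)=\int_0^\epsilon\nabla\Psi(\mu(G_k)+tv_k)^\top v_k\,\mathrm dt\le -\epsilon\,d(G_k)+\epsilon\,\omega(\epsilon D)\,D,
\]
which already gives $\Phi(G_{k+1})\le\Phi(G_k)$, so $\Phi(G_k)$ decreases to a limit. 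If $d(G_k)\ge\beta>0$ infinitely often, choosing $\epsilon$ small enough that $\omega(\epsilon D)D\le\beta/2$ forces a decrease of at least $\epsilon\beta/2$ at infinitely many steps, contradicting boundedness below; hence $\liminf_k d(G_k)=0$. Since $h_k:=\Phi(G_k)-\Phi(\hat Q_d)$ is monotone and $0\le h_k\le d(G_k)$, passing to a subsequence along which $d(G_k)\to 0$ forces $h_k\to 0$, i.e.\ $\Phi(G_k)\to\Phi(\hat Q_d)$.

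Finally, ISDM follows by domination: its update minimizes $\Phi$ over a simplex that contains the single VDM direction (take the global minimizer among the chosen local minima $\lambda_1,\dots,\lambda_s$ and zero out the remaining weights), so its per-iteration decrease is at least that of VDM, and the same argument gives $\Phi(G_k)\to\Phi(\hat Q_d)$. The main obstacle I anticipate is the descent estimate itself: Assumption \ref{assumption: structure} only grants $\phi\in C^1$, not a Lipschitz gradient, and distances such as KL are singular at $\mu_x=0$, so I must argue through the uniform-continuity modulus $\omega$ on a sublevel set rather than through a clean quadratic upper bound. This sacrifices the $O(1/k)$ Frank--Wolfe rate but still secures convergence of the objective, which is exactly what the theorem asserts.
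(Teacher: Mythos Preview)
Your proposal is correct and follows essentially the same route as the paper: reduce to the finite-dimensional convex program $\Psi$ on the compact image $M=\mu(\mathcal P([0,\theta_\ast]))$, use the linear representation $\Phi'(G,P)=\int\Phi'(G,\delta_\lambda)\,\mathrm dP$ together with convexity to get both the stopping criterion and the suboptimality bound $\Phi(G)-\Phi(\hat Q_d)\le d(G)$, and then run a conditional-gradient descent estimate based on uniform continuity of $\nabla\Psi$ on a sublevel set; ISDM is handled by domination in both. The only place you are slightly less careful than the paper is the domain on which the modulus $\omega$ is valid: your bound needs $\mu(G_k)+t v_k$ (not just $\mu(G_k)$) to lie in the region where $\nabla\Psi$ is uniformly continuous, and the paper closes this by working on a slightly enlarged sublevel set $\mathcal U_{\alpha+1}\supset\mathcal U_\alpha$ and choosing $\epsilon$ small enough that the interpolated point stays inside it---an easy patch to your write-up.
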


The following is the general theory on $\hat Q_d$ assuming \eqref{eq:model-general} and \eqref{eq:model-general2}. Note that it does not require Assumption \ref{assumption: structure} and applies to any minimizer (not necessarily unique) of \eqref{eq:minimizer}.

\begin{theorem}\label{theorem: exponential family}
Assume \eqref{eq:model-general} and \eqref{eq:model-general2} and Assumption \ref{assumption: KLbound}. It then holds true that
\begin{align*}
    \sup_{Q \in \mathcal{P}([0, \theta_{\ast}])}\mathbb{E}W_{1}^{\sigma}(Q, \hat{Q}_{d}) \leq C n^{-c},
\end{align*}
where $C=C(\theta_\ast,\sigma^2,\gamma_1,\gamma_2)$ and $c=c(\theta_\ast,\sigma^2)$ are two positive constants.
\end{theorem}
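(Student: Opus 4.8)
The plan is to split the error into a \emph{statistical} part, controlling how close the fitted mixture PMF $h_{\hat Q_d}$ is to the true $h_Q$ in Hellinger distance, and an \emph{analytic} part, converting Hellinger-closeness of the mixture PMFs into $W_1^\sigma$-closeness of the mixing distributions through the exponential-family structure \eqref{eq:model-general2}. Write $H=H(h_Q,\hat{h})$ with $\hat h:=h_{\hat Q_d}$. For the statistical part, the only property of $\hat Q_d$ I would use is its defining optimality $d(h^{\rm obs}\Vert \hat h)\le d(h^{\rm obs}\Vert h_Q)$, valid for any (possibly non-unique) minimizer, which is why Assumption \ref{assumption: structure} is not needed here. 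Sandwiching $d$ by Assumption \ref{assumption: KLbound}, the lower bound $\gamma_1 H^2\le d$ extracts Hellinger control, while the upper bound $d\le\gamma_2\,{\rm KL}$ reduces the right-hand side to the empirical deviation of $h^{\rm obs}$ from $h_Q$. Since $f(\cdot;\theta)$ is analytic in $\theta$ on the compact interval $[0,\theta_\ast]$, the class $\{h_{\tilde Q}:\tilde Q\in\mathcal P([0,\theta_\ast])\}$ has small Hellinger metric entropy, and a standard empirical-process argument in the spirit of \cite{Jana22} yields $\E H^2\lesssim {\rm Polylog}(n)/n$; even a crude version of this bound already suffices for the target $n^{-c}$ rate.

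For the analytic part, I would start from Kantorovich--Rubinstein duality in the smoothed form
\begin{align*}
W_1^\sigma(Q,\hat Q_d)=\sup_{\|L\|_{\rm Lip}\le 1}\int_0^{\theta_\ast}(L\ast\phi_\sigma)(\theta)\,\d\big(Q-\hat Q_d\big)(\theta),
\end{align*}
where $\phi_\sigma$ is the $\cN(0,\sigma^2)$ density, so that each test function $L\ast\phi_\sigma$ is entire and slowly varying. By Lemma \ref{lemma: polynomial} this smoothed Lipschitz function admits a degree-$K$ polynomial approximation $\sum_{k=0}^K c_k\theta^k$ on $[0,\theta_\ast]$ whose uniform error decays geometrically in $K$. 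The form \eqref{eq:model-general2} is what ties the polynomial moments to the observable PMFs: since $h_{\tilde Q}(x)/w(x)=\int\theta^x g(\theta)\,\d\tilde Q(\theta)$ and $g$ is analytic and nonvanishing near $0$ with $1/g(\theta)=\sum_{j\ge 0}b_j\theta^j$ convergent for $\theta\le\theta_\ast<\theta_c$, one inverts the tilt as
\begin{align*}
\int_0^{\theta_\ast}\theta^k\,\d\big(Q-\hat Q_d\big)(\theta)=\sum_{j\ge 0}b_j\,\frac{h_Q(k+j)-\hat h(k+j)}{w(k+j)}.
\end{align*}

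Assembling these, the integral against $L\ast\phi_\sigma$ becomes a weighted linear combination $\sum_x\beta_x\big(h_Q(x)-\hat h(x)\big)$ plus the geometrically small approximation remainder, with $\beta_x=w(x)^{-1}\sum_{k+j=x}c_kb_j$. Factoring $h_Q(x)-\hat h(x)=(\sqrt{h_Q(x)}-\sqrt{\hat h(x)})(\sqrt{h_Q(x)}+\sqrt{\hat h(x)})$ and applying Cauchy--Schwarz bounds this, up to an absolute constant, by $H\cdot\big(\sum_x\beta_x^2\,(h_Q(x)+\hat h(x))\big)^{1/2}$, so the analytic step reduces to controlling the weighted $\ell^2$ norm $\sum_x\beta_x^2\,h(x)$. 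This converges because $h(x)=w(x)\,m_x$ with bounded moments $m_x\le C\theta_\ast^x$ produces the cancellation $\beta_x^2 h(x)\asymp w(x)^{-1}\theta_\ast^x\big(\sum_{k+j=x}c_kb_j\big)^2$.

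The main obstacle is precisely this last bound: the inverse weights $w(x)^{-1}$ grow (factorially, in the Poisson case), and both the approximation coefficients $c_k$ and the inversion coefficients $b_j$ grow with the degree, so a naive estimate lets the statistical error be amplified uncontrollably. The resolution is to choose the truncation degree $K=K(n)$, growing slowly (on the order of a power of $\log n$), so as to balance the geometrically decaying polynomial remainder against the degree-dependent amplification of $H$; the convergence-radius condition $\theta_\ast<\theta_c$ together with the quantitative control in Lemma \ref{lemma: polynomial} is what keeps $\sum_x\beta_x^2 h(x)$ finite and only polynomially large in $K$. Optimizing $K$ against $\E H^2\lesssim{\rm Polylog}(n)/n$ then yields the stated $Cn^{-c}$ with $C$ depending on $(\theta_\ast,\sigma^2,\gamma_1,\gamma_2)$ and $c$ on $(\theta_\ast,\sigma^2)$. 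Unlike the sharp Poisson analysis behind Theorem \ref{thm:main}, here I would not track the exact exponent, settling for the qualitative polynomial rate.
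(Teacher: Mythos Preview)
Your outline is plausible in spirit but departs substantially from the paper's proof, and one of your steps has a real gap.

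\textbf{What the paper actually does.} The paper's argument is a minimal modification of \citet[Theorem~7, Lemma~17]{Han23}. It does \emph{not} rebuild the analytic machinery: it accepts Han23's approximation of $\ell_\sigma$ by a \emph{finite} linear combination $\sum_{x=0}^{2k} b_x f(x;\theta)$ in the mixture basis, and its only new contribution is the inequality
\[
\Big|\sum_{x=0}^{2k} b_x[h^{\rm obs}(x)-h_{\hat Q_d}(x)]\Big|
\;\le\; C(\gamma_1,\gamma_2)\,k^{1/2}\max_{x}|b_x|\,\sqrt{{\rm KL}(h^{\rm obs}\Vert h_Q)},
\]
obtained from the crude bound $\sum_x|\cdot|\le \max_x|b_x|\cdot\sqrt{16k}\cdot H(h^{\rm obs},h_{\hat Q_d})$ together with Assumption~\ref{assumption: KLbound} and the optimality of $\hat Q_d$. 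Everything else---choice of $k$, control of $\max|b_x|$, and the ${\rm KL}(h^{\rm obs}\Vert h_Q)$ bound---is inherited verbatim from Han23, and since the crude $n^{-c}$ conclusion is the target, no sharper tools are needed. In particular the paper works through $h^{\rm obs}$ on both sides rather than bounding $H(h_Q,h_{\hat Q_d})$ directly, and it never invokes entropy or empirical-process arguments: the statistical input is just the direct high-probability bound on ${\rm KL}(h^{\rm obs}\Vert h_Q)$ from \citet[Lemma~17]{Han23}.

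\textbf{Where your proposal differs and where it slips.} You reconstruct the analytic step from scratch, importing two devices that the paper reserves for the sharper Poisson result (Theorem~\ref{theorem: poisson}): Lemma~\ref{lemma: polynomial} and the weighted Cauchy--Schwarz bound $H\cdot(\sum_x\beta_x^2 h(x))^{1/2}$. The first is imprecise---Lemma~\ref{lemma: polynomial} is stated for the Poisson tilt $g(\theta)=e^{-\theta}$, so for a general $g$ you would need either Han23's Jackson-type approximation in the $\{f(x;\theta)\}$ basis or a direct Taylor expansion of $\ell_\sigma$ (the Hermite bound inside the proof of Lemma~\ref{lemma: polynomial} does supply this, but that is not what the lemma asserts). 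The second is where the real gap lies: your inversion $\int\theta^k\,\d(Q-\hat Q_d)=\sum_{j\ge 0} b_j\,[h_Q(k+j)-\hat h(k+j)]/w(k+j)$ is an \emph{infinite} sum, so the coefficients $\beta_x$ are nonzero for all $x$, and your claim that $\sum_x\beta_x^2 h(x)$ is ``only polynomially large in $K$'' is asserted, not proved. The heuristic $\beta_x^2 h(x)\asymp w(x)^{-1}\theta_\ast^x(\sum_{k+j=x}c_kb_j)^2$ does not obviously yield a convergent tail: $w(x)^{-1}$ can grow factorially while $\theta_\ast<\theta_c$ only guarantees $w(x)\theta_\ast^x\to 0$, which says nothing about $w(x)^{-1}\theta_\ast^x$. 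The paper sidesteps this entirely by approximating directly in the finite basis $\{f(x;\theta)\}_{x\le 2k}$, so no inversion tail ever appears. If you want to keep your route, you must truncate the $j$-sum and control the remainder explicitly; otherwise, the crude $\max|b_x|\sqrt{k}$ bound the paper uses is both simpler and sufficient for the qualitative $n^{-c}$ target. Your appeal to Hellinger entropy in the style of \cite{Jana22} is also unnecessary overhead here; the elementary ${\rm KL}(h^{\rm obs}\Vert h_Q)$ bound already delivers what you need.
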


Specializing to the Poisson case, the following result establishes (a generalized version of) the main theorem, Theorem \ref{thm:main}.

\begin{theorem}\label{theorem: poisson}
In the case of the Poisson mixture model \eqref{eq:poisson-model}, assuming further Assumption \ref{assumption: chibound}, we have    
\begin{align*}
    \sup_{Q \in \mathcal{P}([0, \theta_{\ast}])}\mathbb{E}W_{1}^{\sigma}(Q, \hat{Q}_{d})\leq 
    \begin{cases}
        C(\theta_{\ast}, \sigma^{2}, \gamma_{3}, \gamma_{4})\cdot n^{-0.5+c/\log \log n}\,\textnormal{Polylog}(n),\,\,\,\,\,&\text{if}\,\,\sigma^{2} <4\theta_{\ast},\\
        C(\theta_{\ast}, \sigma^{2}, \gamma_{3}, \gamma_{4})\cdot n^{-0.5}\,\textnormal{Polylog}(n)), &\text{if}\,\,\sigma^{2} \geq 4\theta_{\ast},
    \end{cases}
\end{align*}
holds with $c = 8 \log (4\theta_{\ast}\sigma^{-2})$. 
\end{theorem}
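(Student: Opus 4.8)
The plan is to pass to the Kantorovich dual of $W_1^\sigma$, move the Gaussian smoothing onto the test function, and then exchange the smoothed test function for a finite combination of Poisson PMFs using the tailored Lemma \ref{lemma: polynomial}; the overall architecture parallels the proof of Theorem \ref{theorem: exponential family} but replaces the generic Jackson-type approximation by the Poisson-specific one and uses the weaker chi-square control of Assumption \ref{assumption: chibound}. Writing $\varphi_\sigma$ for the $\mathcal{N}(0,\sigma^2)$ density and $\tilde f:=f\ast\varphi_\sigma$ for the smoothing of a $1$-Lipschitz $f$, one has
\[
W_1^\sigma(Q,\hat Q_d)=\sup_{\mathrm{Lip}(f)\le1}\int_0^{\theta_\ast}\tilde f(\theta)\,\d(Q-\hat Q_d)(\theta).
\]
Because $\int_0^{\theta_\ast}{\rm Pois}(x;\theta)\,\d(Q-\hat Q_d)(\theta)=h_Q(x)-h_{\hat Q_d}(x)$, I would use Lemma \ref{lemma: polynomial} to produce, uniformly over all smoothed $1$-Lipschitz $\tilde f$, coefficients $(c_x)_{x\le K}$ with small uniform error $\bar\epsilon_K:=\sup_f\lVert\tilde f-\sum_{x\le K}c_x{\rm Pois}(x;\cdot)\rVert_{\infty,[0,\theta_\ast]}$ and controlled size $\bar A_K:=\sup_f(\sum_{x\le K}c_x^2(h_Q(x)+h_{\hat Q_d}(x)))^{1/2}$. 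Using $\lVert Q-\hat Q_d\rVert_{\mathrm{TV}}\le2$ on the approximation remainder, this gives
\[
W_1^\sigma(Q,\hat Q_d)\le 2\bar\epsilon_K+\bar A_K\Big(\sum_{x\ge0}\frac{(h_Q(x)-h_{\hat Q_d}(x))^2}{h_Q(x)+h_{\hat Q_d}(x)}\Big)^{1/2}\le 2\bar\epsilon_K+2\bar A_K\,H(h_Q,h_{\hat Q_d}),
\]
where the first inequality is Cauchy--Schwarz and the second uses $(\sqrt a+\sqrt b)^2\le2(a+b)$.

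Next I would bound the fitted Hellinger error by the empirical fluctuation. By the triangle inequality, the optimality $d(h^{\rm obs}\Vert h_{\hat Q_d})\le d(h^{\rm obs}\Vert h_Q)$, and Assumption \ref{assumption: chibound},
\[
\gamma_3 H^2(h^{\rm obs},h_{\hat Q_d})\le d(h^{\rm obs}\Vert h_{\hat Q_d})\le d(h^{\rm obs}\Vert h_Q)\le\gamma_4\,\chi^2(h^{\rm obs}\Vert h_Q),
\]
together with $H^2(h_Q,h^{\rm obs})\le\tfrac12\chi^2(h^{\rm obs}\Vert h_Q)$, so that $H(h_Q,h_{\hat Q_d})\le C(\gamma_3,\gamma_4)\sqrt{\chi^2(h^{\rm obs}\Vert h_Q)}$. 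The chi-square must then be controlled in expectation, but $\E\chi^2(h^{\rm obs}\Vert h_Q)=n^{-1}\sum_{x\ge0}(1-h_Q(x))$ diverges over the infinite Poisson support, so I would restrict to the event $\mathcal{E}=\{\max_i X_i\le K\}$. On $\mathcal{E}$ the empirical PMF is supported on $\{0,\dots,K\}$, whence $\E[\ind_{\mathcal E}\,\chi^2(h^{\rm obs}\Vert h_Q)]\le (K+1)/n+\sum_{x>K}h_Q(x)$, and the super-exponential Poisson tail makes both $\sum_{x>K}h_Q(x)$ and $\P(\mathcal E^c)\le n\,\P({\rm Pois}(\theta_\ast)>K)$ negligible once $K\gtrsim\log n/\log\log n$. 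The contribution of $\mathcal E^c$ to the expected distance is absorbed by the deterministic bound $W_1^\sigma(Q,\hat Q_d)\le W_1(Q,\hat Q_d)\le\theta_\ast$.

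Combining the displays, taking expectations, and applying $\E\sqrt{\,\cdot\,}\le\sqrt{\E\,\cdot\,}$ yields
\[
\E W_1^\sigma(Q,\hat Q_d)\lesssim \bar\epsilon_K+\bar A_K\sqrt{K/n}+\theta_\ast\,\P(\mathcal E^c),
\]
after which it remains only to insert the quantitative bounds of Lemma \ref{lemma: polynomial} and optimize over $K$. I expect the lemma to yield a super-geometric approximation error (the factorial gain coming from the derivative bounds $\lVert\tilde f^{(k)}\rVert_\infty\lesssim\sigma^{1-k}\sqrt{(k-1)!}$) together with coefficient growth of the form $\bar A_K\lesssim(4\theta_\ast/\sigma^2)^K\,\mathrm{poly}(K)$, the base $4\theta_\ast/\sigma^2$ emerging from the Poisson generating function $\sum_x{\rm Pois}(x;\theta)z^x=e^{\theta(z-1)}$. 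Taking $K=8\log n/\log\log n$ then forces $\bar\epsilon_K\le n^{-1/2}$ (negligible) while keeping $\bar A_K\le n^{c/\log\log n}\mathrm{Polylog}(n)$ with $c=8\log(4\theta_\ast/\sigma^2)$, so the statistical term $\bar A_K\sqrt{K/n}$ becomes $n^{-1/2+c/\log\log n}\mathrm{Polylog}(n)$; when $\sigma^2\ge4\theta_\ast$ the base is at most one, $\bar A_K$ collapses to $\mathrm{Polylog}(n)$, and the clean $n^{-1/2}\mathrm{Polylog}(n)$ rate results.

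The hard part will be Lemma \ref{lemma: polynomial} itself: establishing, uniformly over every Gaussian-smoothed $1$-Lipschitz function, an expansion in the Poisson PMF basis whose truncation error and coefficient magnitude are simultaneously sharp, and thereby pinning down the precise threshold $\sigma^2=4\theta_\ast$ and the constant $c=8\log(4\theta_\ast/\sigma^2)$. Everything downstream --- Cauchy--Schwarz, the optimality inequality of the minimum-distance construction, the Hellinger--chi-square comparisons, and the truncation and Poisson-tail estimates --- is routine once this approximation-theoretic input, which is exactly where the special structure of the Poisson is exploited, is in hand.
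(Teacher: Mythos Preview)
Your proposal is correct and follows the same architecture as the paper's proof: pass to the dual of $W_1^\sigma$, approximate the smoothed Lipschitz test function in the Poisson basis via Lemma~\ref{lemma: polynomial}, apply Cauchy--Schwarz to reduce to a Hellinger distance, and then use Assumption~\ref{assumption: chibound} together with the minimum-distance optimality to pass to $\chi^2(h^{\rm obs}\Vert h_Q)$, with truncation level $K\asymp\log n/\log\log n$. The one substantive difference lies in how the chi-square term is controlled. The paper proves a separate high-probability bound (Lemma~\ref{lemma: chi-square}, via a H\"older-inequality trick with auxiliary geometric weights $A^x$) and then integrates the tail in $\delta$ to recover the expectation; you instead restrict to the event $\mathcal{E}=\{\max_i X_i\le K\}$, bound $\E[\chi^2(h^{\rm obs}\Vert h_Q)\ind_{\mathcal E}]\le (K{+}1)/n+\sum_{x>K}h_Q(x)$ directly, and apply Jensen. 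Your route is somewhat cleaner---it sidesteps both the $\delta$-integration and the Markov step the paper uses to control $h^{\rm obs}(x)$ inside its Cauchy--Schwarz weight $(\sqrt{h^{\rm obs}}+\sqrt{h_{\hat Q_d}})^2$, since your weight $h_Q+h_{\hat Q_d}$ is deterministically bounded by $2\theta_\ast^x/x!$---and it yields slightly tidier polylogarithmic factors; the paper's route has the minor advantage of delivering an explicit high-probability deviation inequality for $\chi^2(h^{\rm obs}\Vert h_Q)$ as a by-product. Either way, the crux is exactly Lemma~\ref{lemma: polynomial}, as you identified.
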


In particular, taking $d(\cdot,\cdot)$  to be the KL divergence, Theorem \ref{theorem: poisson} yields Theorem \ref{thm:main}.

\section{Proofs}\label{section:proofs_6}

Let's start the proof section with some additional notation. Let the convolution between the functions $f(x)$ and $g(x)$ be given as
\begin{align*}
    [f\ast g](x) = \int_{\mathbb{R}} f(x-y)g(y)\d y,
\end{align*}
and let $\varphi_{\sigma}(\theta)$ be the probability density function of a normal distribution $\mathcal{N}(0, \sigma^{2})$. Denote $\ell(\cdot)$ as a $1$-Lipschitz function with $\ell(0) = 0$ and denote $\ell_{\sigma}(\theta) := [\ell \ast \varphi_{\sigma}](\theta)$. Then, a dual representation of $W_{1}(Q_{1}, Q_{2})$ for probability measures $Q_{1}$ and $Q_{2}$ over $[0, \theta_{\ast}]$ is
\begin{align*}
    W_{1}(Q_{1}, Q_{2}) = \sup_{\ell(\cdot)}\int_{0}^{\theta_{\ast}} \ell(\theta)\,(\d Q_{1} - \d Q_{2}),
\end{align*}
and the dual representation of $W_{1}^{\sigma}(Q_{1}, Q_{2})$ is
\begin{align*}
    W_{1}^{\sigma}(Q_{1}, Q_{2}) = \sup_{\ell(\cdot)}\int_{0}^{\theta_{\ast}} (\ell_{\sigma}(\theta)-\ell_{\sigma}(0))\,(\d Q_{1} - \d Q_{2}).
\end{align*}

\subsection{Proofs of main results}
\begin{proof}[Proof of Theorem \ref{theorem: algorithm}:]
We follow the proof of \citet[Section 3.2]{Simar76} and \citet[Theorem 1]{Jana22} on the existence of the estimator, and \citet[Section 2]{Bohning82} and \citet[Theorem 3.1]{Miao24} for the convergence of the algorithms. \\

\par\noindent\textbf{Step 1.} In this step, we prove the existence of the minimum-distance estimator. Recalling the function $\mu(G)$, its image $\mathcal{U}$ is convex and compact since it is bounded and closed by Prokhorov's Theorem \citep[Section 5]{billingsley2013convergence}. In addition, since $\Psi(u)$ is strictly convex, there exists a minimizer $\hat{u} = (\hat{u}_{1},\dots,\hat{u}_{q})$ of $\Psi$ on $\mathcal{U}$ with some measure $\hat{G}$ that satisfies $\hat{u} = \mu(\hat{G})$. It leads to the existence of the minimum-distance estimator $\hat{G}$. \\

\par\noindent\textbf{Step 2.} Then, we consider the VDM and analyze its convergence. First, we establish the relation between the directional derivative $\Phi'(G, \delta_{\lambda})$ and the finite difference $\Phi((1-\epsilon)G + \epsilon \delta_{\lambda}) - \Phi(G)$. We obtain the decrease of the function $\Phi$ in the algorithm in the next step based on this relation.

 Recall the directional derivative for probability measure $G\in \mathcal{P}([0, \theta_{\ast}])\backslash \{\delta_{0}\}$ with probability measure $P$ as
\begin{align*}
    \Phi'(G, P) = \lim_{\epsilon \to 0^{+}}\frac{\Phi((1-\epsilon)G+\epsilon P) - \Phi(G)}{\epsilon}= \sum_{x = 1}^{q}\frac{\partial \phi}{\partial y_{2}}(\alpha_{x}, \mu_{x}(G))(\mu_{x}(P) - \mu_{x}(G)), 
\end{align*}
which has specific relation with $\phi$ in this case. Our main statement in this step is that there exists some $\epsilon_1=\epsilon_{1}(\alpha, \kappa)$ such that
\begin{align*}
    \Phi((1-\epsilon)G + &\epsilon \delta_{\lambda}) - \Phi(G) \leq -\epsilon \kappa / 2   
\end{align*}
holds for every $0<\epsilon<\epsilon_{1}$, $G\in\mathcal{P}([0, \theta_{\ast}])$, and $\lambda\in[0,\theta_{\ast}]$ with conditions $\Phi(G)\leq \alpha$ and $\Phi'(G, \delta_{\lambda})\leq -\kappa<0$. Since $\Phi = \Psi \circ \mu$, there exists $\eta \in [0, 1]$ such that
\begin{align*}
    \Phi((1-\epsilon)G &+ \epsilon \delta_{\lambda}) - \Phi(G) - \epsilon \Phi'(G, \delta_{\lambda})\\ &= \epsilon(\nabla\Psi((1-\eta\epsilon)\mu(G) + \eta\epsilon\mu(\delta_{\lambda}))-\nabla \Psi(\mu(G)))^{\top}(\mu(\delta_{\lambda}) - \mu(G))
\end{align*}
by the mean value theorem and direct computation. Denote the compact set 
\[
\mathcal{U}_{\alpha} = \{ u\in\mathcal{U}\, \vert \, \Psi(u)\leq \alpha \} 
\]
and consider $\mu(G)\in \mathcal{U}_{\alpha}\subset \mathcal{U}_{\alpha+1}$. We note that $\nabla \Psi$ is continuous and $\mathcal{U}_{\alpha+1}$ is compact, so $\nabla \Psi$ is uniformly continuous on $\mathcal{U}_{\alpha+1}$. In addition, since $\Vert\mu(\delta_{\lambda}) - \mu(G)\Vert_{2}\leq 2$ holds, there exists $\epsilon_{1}(\alpha, \zeta)$ such that $(1-\eta \epsilon) \mu(G) + \eta \epsilon \mu(\delta_{\lambda})\in \mathcal{U}_{\alpha+1}$ and 
\begin{align*}
    \vert \nabla\Psi((1-\eta\epsilon)\mu(G) + \eta\epsilon\mu(\delta_{\lambda}))-\nabla \Psi(\mu(G))\vert< \zeta
\end{align*}
for all $\mu(G) \in \mathcal{U}_{\alpha}$ and $0<\epsilon<\epsilon_{1}$. Finally, denoting $\mathcal{P}_{\alpha} = \{G\in\mathcal{P}([0, \theta_{\ast}])\vert\Phi(G) \leq \alpha\}$ analogous to $\mathcal{U}_{\alpha}$, for every $G\in \mathcal{P}_{\alpha}$ that satisfies $\Phi'(G, \delta_{\lambda})\leq -\kappa$ and for every $0<\epsilon<\epsilon_{1}(\alpha, \kappa /4)$, we then have  
\begin{align*}
    \Phi((1-\epsilon)G + &\epsilon \delta_{\lambda}) - \Phi(G) \\&\leq \epsilon \Phi'(G, \delta_{\lambda}) + \epsilon\vert\nabla\Psi((1-\eta\epsilon)\mu(G) + \eta\epsilon\mu(\delta_{\lambda}))-\nabla \Psi(\mu(G))\vert\vert\mu(\delta_{\lambda}) - \mu(G)\vert\\ &\leq -\epsilon\kappa +\epsilon\kappa/2 = -\epsilon \kappa / 2   
\end{align*}
holds. It leads to the bound of the difference between the finite difference and directional derivative, which is essential in analyzing the algorithm. \\

\par\noindent\textbf{Step 3.} Finally, we analyze the convergence of the function $\Phi$ during the iteration. In particular, letting the value of $G$ on the $N$-th iteration of the algorithm be $G_{N}$ and recalling the minimizer $\hat{G}$, we prove that $\Phi(G_{N})\to \Phi(\hat{G})$ during the iteration and $\Phi(G_{N}) = \Phi(\hat{G})$ holds when the algorithm finishes at the $N$-th stop.  

First, we prove that $\Phi(G_{N})\to \Phi(\hat{G})$ during the iteration. Since $\Phi(G_{N})$ is an decreasing sequence with lower bound $\Phi(\hat{G})$, there exists a limit of $\Phi(G_{N})$ which is $\lim_{N\to\infty}\Phi(G_{N}) = \Phi^{-}$. We assume that the limit $\Phi^{-}\neq \Phi(\hat{G})$ and prove that it leads to a contradiction. Considering the first step of the algorithm and letting $\lambda_{\text{min}}\in \argmin_{\lambda \in[0, \theta_{\ast}]}\Phi'(G_{N}, \delta_{\lambda})$, we obtain the upper bound of $\Phi'(G_{N}, \delta_{\lambda_{\text{min}}})$ with $\Phi'(G_{N}, \hat{G})$ such that
\begin{align*}
    \Phi'(G_{N}, \delta_{\lambda_{\text{min}}}) &= \min_{\lambda\in[0, \theta_{\ast}]} \sum_{x = 1}^{q}\frac{\partial \phi}{\partial y_{2}}(\alpha_{x}, \mu_{x}(G))(f(i_{x}; \lambda) - \mu_{x}(G))\\
    &\leq\int_{0}^{\theta_{\ast}} \big(\sum_{x = 1}^{q}\frac{\partial \phi}{\partial y_{2}}(\alpha_{x}, \mu_{x}(G))(f(i_{x}; \lambda) - \mu_{x}(G))\big)\d \hat{G}(\lambda)\\
    &=\sum_{x = 1}^{q}\frac{\partial \phi}{\partial y_{2}}(\alpha_{x}, \mu_{x}(G))(\mu_{x}(\hat{G}) - \mu_{x}(G))= \Phi'(G_{N}, \hat{G}).
\end{align*}
By the convexity of $\Phi$,
\begin{align*}
    \Phi'(G_{N}, \hat{G}) \leq \Phi(\hat{G}) - \Phi(G_{N})\leq\Phi(\hat{G}) - \Phi^{-}<0
\end{align*}
holds and we derive that 
\[
\Phi'(G_{N}, \delta_{\lambda_{\text{min}}})\leq\Phi(\hat{G}) - \Phi^{-}<0.
\]
In addition,, since $\Phi(G_{N})$ is a decreasing with respect to the iteration $N$, there exists $N_{1}$ such that $\Phi(G_{N})\leq \alpha$ holds for every $N>N_{1}$ and some $\alpha$. Therefore, by the main statement in Step 2, there exists $\epsilon_{1}$ such that 
\begin{align*}
    \Phi(G_{N+1}) - \Phi(G_{N}) \leq \Phi((1-\epsilon)G_{N}+\epsilon \delta_{\lambda_{\text{min}}}) - \Phi(G_{N})\leq -\epsilon(\Phi^{-}-\Phi(\hat{G}))
\end{align*}
for every $N>N_{1}$ and $0<\epsilon<\epsilon_{1}$. It contradicts that $\Phi(G_{N})$ has a lower bound and it leads to $\Phi^{-}=\Phi(\hat{G})$ and $\Phi(G_{N})\to \Phi(\hat{G})$.

Then, we consider the case when the algorithm stops at iteration $N$ and prove that $\Phi(G_{N}) = \Phi(\hat{G})$. Since $\hat{G}$ is the minimizer of $\Phi$, $\Phi(G_{N}) \geq \Phi(\hat{G})$ holds and it suffices to prove that $\Phi(G_{N}) \leq \Phi(\hat{G})$. By the stopping condition of the algorithm, $\Phi'(G_{N}, \delta_{\lambda_{\text{min}}})\geq 0$ and 
\begin{align*}
    0\leq \Phi'(G_{N}, \delta_{\lambda_{\text{min}}}) \leq \Phi(\hat{G}) - \Phi(G_{N})
\end{align*}
holds by the above inequality, so $\Phi(G_{N}) \leq \Phi(\hat{G})$ and it completes the proof. Regarding the ISDM, we could follow the same proof since the local minima contain global minima.
\end{proof}

\begin{proof}[Proof of Theorem \ref{theorem: exponential family}:]

We follow the proofs of \citet[Theorem 7]{Han23} and \citet[Lemma 17]{Han23}, assuming that the generalized distance $d$ satisfies the Assumption \ref{assumption: KLbound} with constants $\gamma_{1}$ and $\gamma_{2}$. We also inherit the notation from these papers.

The proof consists of two steps. In the first step, we modify the approximation part on the original proof and make this approximation valid for the general minimum-distance estimators. Then, the next step is to derive the desired result with the previous result that the convergence rate of minimum-distance estimator for KL-divergence is $O(n^{-c})$ for some $c>0$.\\

\par\noindent\textbf{Step 1.}
In this step, we derive the approximation of minimum-distance estimators based on the original approximation. Based on the assumption and information-theoretical inequalities, one can prove that
\begin{align}\label{eq:exponential}
    \big\vert\sum_{x=0}^{2k} b_{x}[h^{\text{obs}}(x)-h_{\hat{Q}_{d}}(x)]\big\vert+\big\vert\sum_{x=0}^{2k} b_{x}[h^{\text{obs}}(x)-h_{Q}(x)]\big\vert\leq C k^{\frac{1}{2}} \max_{0\leq x \leq 2k}\vert b_{x}\vert\sqrt{\text{KL}(h^{\text{obs}}\Vert h_{Q})} 
\end{align}
for $b_{x}\in \mathbb{R}$ and sufficient constant $C = C(\gamma_{1},\gamma_{2})$ and relate it with the original proof. 

We start with bounding the first term of \eqref{eq:exponential}. By the triangle inequality, we could separate the coefficients $b_{x}$ and the partial sum of total variance as 
\begin{align*}
    \big\vert\sum_{x=0}^{2k} b_{x}[h^{\text{obs}}(x)-h_{\hat{Q}_{d}}(x)]\big\vert\leq \sum_{x=0}^{2k}\big\vert b_{x}\big \vert \big \vert h^{\text{obs}}(x)-h_{\hat{Q}_{d}}(x)\big\vert\leq \max_{0 \leq x \leq 2k}\vert b_{x}\vert\sum_{x=0}^{2k} \big \vert h^{\text{obs}}(x)-h_{\hat{Q}_{d}}(x)\big\vert.
\end{align*}
Then, by the factorization of the partial sum of total variance and Cauchy-Schwarz inequality, we have the Hellinger distance bound as
\begin{align*}
    \sum_{x = 0}^{2k}\big\vert h^{\text{obs}}(x) - h_{\hat{Q}_{d}}(x)\big \vert
    &\leq \big(\,\sum_{x = 0}^{2k} \big(\,\sqrt{h^{\text{obs}}(x)} + \sqrt{h_{\hat{Q}_{d}}(x)}\,\big)^{2}\,\big)^{\frac{1}{2}} \big(\,\sum_{x = 0}^{2k}\big(\,\sqrt{h^{\text{obs}}(x)} - \sqrt{h_{\hat{Q}_{d}}(x)}\,\big)^{2}\,\big)^{\frac{1}{2}}\\
    &\leq \sqrt{16k}\,\big(\,\frac{1}{2}\sum_{x = 0}^{2k}\big(\,\sqrt{h^{\text{obs}}(x)} - \sqrt{h_{\hat{Q}_{d}}(x)}\,\big)^{2}\,\big)^{\frac{1}{2}}\leq \sqrt{16k}\,H(h^{\text{obs}}, h_{\hat{Q}_{d}}). 
\end{align*}
Based on Assumption \ref{assumption: KLbound}, we have the relation between the Hellinger distance, generalized distance $d$, and KL-divergence as
\begin{align*}
    H(h^{\text{obs}}, h_{\hat{Q}_{d}})\leq \sqrt{\gamma_{1}^{-1}d(h^{\text{obs}}\Vert h_{\hat{Q}_{d}})}\leq \sqrt{\gamma_{1}^{-1}d(h^{\text{obs}}\Vert h_{Q})} \leq \sqrt{\gamma_{1}^{-1}\gamma_{2}\text{KL}(h^{\text{obs}}\Vert h_{Q})}
\end{align*}
where the second inequality is from the definition of minimum-distance estimator. Letting $C(\gamma_{1}, \gamma_{2}) = (16 \gamma_{1}^{-1}\gamma_{2})^{\frac{1}{2}}$, we obtain
\begin{align*}
    \big\vert\sum_{x=0}^{2k} b_{x}[h^{\text{obs}}(x)-h_{\hat{Q}_{d}}(x)]\big\vert&\leq \max_{0 \leq x \leq 2k}\vert b_{x}\vert\sum_{x=0}^{2k} \big \vert h^{\text{obs}}(x)-h_{\hat{Q}_{d}}(x)\big\vert\\
    &\leq \max_{0 \leq x \leq 2k}\vert b_{x}\vert\sqrt{16k}\,H(h^{\text{obs}}, h_{\hat{Q}_{d}})\leq C(\gamma_{1}, \gamma_{2})k^{\frac{1}{2}} \max_{0\leq x \leq 2k}\vert b_{x}\vert\sqrt{\text{KL}(h^{\text{obs}}\Vert h_{Q})}, 
\end{align*}
which is the bound for the first term of \eqref{eq:exponential}.

We could derive the similar result for the second term of \eqref{eq:exponential}. By the triangle inequality, we could obtain the bound with the partial sum of total variance as
\begin{align*}
    \big\vert\sum_{x=0}^{2k} b_{x}[h^{\text{obs}}(x)-h_{Q}(x)]\big\vert\leq \sum_{x=0}^{2k}\big\vert b_{x}\big \vert \big \vert h^{\text{obs}}(x)-h_{Q}(x)\big\vert\leq \max_{0 \leq x \leq 2k}\vert b_{x}\vert\sum_{x=0}^{2k} \big \vert h^{\text{obs}}(x)-h_{Q}(x)\big\vert. 
\end{align*}
In addition, by the similar derivation on factorization and Cauchy-Schwarz inequality, we have the bound with the Hellinger distance as
\begin{align*}
    \sum_{x = 0}^{2k}\big\vert h^{\text{obs}}(x) - h_{Q}(x)\big \vert \leq \sqrt{16k}\,\big(\,\frac{1}{2}\sum_{x = 0}^{2k}\big(\,\sqrt{h^{\text{obs}}(x)} - \sqrt{h_{Q}(x)}\,\big)^{2}\,\big)^{\frac{1}{2}}\leq \sqrt{16k}\,H(h^{\text{obs}}, h_{Q}). 
\end{align*}
Finally, applying the relation between Hellinger distance and KL-divergence
\begin{align*}
    H(h^{\text{obs}}, h_{Q}) \leq \sqrt{\frac{\text{KL}(h^{\text{obs}}\Vert h_{Q})}{2}}
\end{align*}
leads to the bound of the second term of \eqref{eq:exponential}
\begin{align*}
    \big\vert\sum_{x=0}^{2k} b_{x}[h^{\text{obs}}(x)-h_{Q}(x)]\big\vert&\leq \max_{0 \leq x \leq 2k}\vert b_{x}\vert\sum_{x=0}^{2k} \big \vert h^{\text{obs}}(x)-h_{Q}(x)\big\vert\\ &\leq \max_{0 \leq x \leq 2k}\vert b_{x}\vert\sqrt{16k}\,H(h^{\text{obs}}, h_{Q})\leq  \max_{0 \leq x \leq 2k}\vert b_{x}\vert\sqrt{16k}\,\sqrt{\text{KL}(h^{\text{obs}}\Vert h_{Q})}.
\end{align*}
Therefore, we could prove the argument by combining the above results. \\

\par\noindent\textbf{Step 2.} Next, we prove the polynomial convergence rate of Gaussian-smoothed Wasserstein distance of minimum-distance estimators $\mathbb{E} W_{1}^{\sigma}(Q, \hat{Q}_{d})$ based on the Step 3 of the proof of \citet[Theorem 7]{Han23}. 

Since the original case of minimum-distance estimator for KL-divergence have the polynomial convergence rate of $O(n^{-c})$, we prove that the convergence rate of our case of general minimum-distance estimator is $O(n^{-c}\log n)$, which also leads the polynomial convergence rate. We note that the parametric density function is given by $f(x; \theta) = g(\theta) w(x) \theta^{x}$ for discrete exponential family models. In addition, we analyze the asymptotics of $k$ in the proof with  the rate of $n$. Regarding the case (i), $k$ satisfies $C^{2k} = n^{\alpha}$ which implies that $k = O(\log n)$. For the case (ii), $k$ satisfies $(C_{1}k)^{2C_{2}k} = n^{\alpha}$, and since
\begin{align*}
    \log (C_{1}\log n)^{2 C_{2} \log n}= 2 C_{2} \log n \log (C_{1}\log n) \geq \alpha \log n = \log n^{\alpha}
\end{align*}
for all sufficiently large $n$, so $k = O(\log n)$ also holds.  

Based on the statement of the original proof, we have the upper bound of $W_{1}^{\sigma}(Q, \hat{Q}_{d})$ as
\begin{align*}
    W_{1}^{\sigma}(Q, \hat{Q}_{d})&\leq C\,\big(\,(2\theta_{\ast}e^{\frac{1}{2}}k^{\frac{1}{2}})^{-k}+\sum_{x\geq k+1}w(x)\theta_{\ast}^{x}+ke^{2k} \max\{1, 1/\theta_{\ast}\}^{2k}\frac{\max_{1\leq x \leq 2k}1/w(x)}{\sqrt{n^{1-\epsilon}\delta^{1+\epsilon}}}\,\big)\\
    &\leq Ck\,\big(\,(2\theta_{\ast}e^{\frac{1}{2}}k^{\frac{1}{2}})^{-k}+\sum_{x\geq k+1}w(x)\theta_{\ast}^{x}+e^{2k} \max\{1, 1/\theta_{\ast}\}^{2k}\frac{\max_{1\leq x \leq 2k}1/w(x)}{\sqrt{n^{1-\epsilon}\delta^{1+\epsilon}}}\,\big)
\end{align*}
with probability at least $1-\delta$ and constant $C = C(\theta_{\ast}, \sigma^{2}, \gamma_{1}, \gamma_{2})$. Then, following the same approach, the maximum ratio between the convergence rate of generalized distance case and the original case is $O(k)$, so the convergence rate of the generalized distance case is given by $O(n^{-c}\log n)$ for some $c>0$. It leads to the conclusion that we have $O(n^{-c})$ convergence rate with some $c>0$ for generalized distance $d$. In addition, we didn't used the specific condition of generalized distance $d$ except for Assumption \ref{assumption: KLbound}, so this rate holds for all generalized distance $d$ and is not related to the specific choice of $d$. 
\end{proof}

\begin{proof}[Proof of Theorem \ref{theorem: poisson}:]
The proof is based on the approach of the proof of \citet[Theorem 7]{Han23} for proving the minimum-distance estimator for KL-divergence in discrete exponential family models. 

We approximate the convoluted $1$-Lipschitz function $\ell_{\sigma}$ with $f(x; \theta)$ for the approximation of Gaussian-smoothed Wasserstein distance $W_{1}^ {\sigma}(Q, \hat{Q}_{d})$ and bound each terms by comparing $h_{Q}$, $h_{\hat{Q}_{d}}$, and $h^{\text{obs}}$. We have two major improvements regarding the convergence rate of $n$ and will present them in the proof. 

\par\noindent\textbf{Step 1.} First step is on the approximation of $W_{1}^{\sigma}(Q, \hat{Q}_{d})$ based on the approximation of $\ell_{\sigma}$ with $f(x; \theta)$, and the main idea is to approximate $\ell_{\sigma}$ with the linear combination of $\{e^{-\theta}\theta^{x}\}$ using Lemma \ref{lemma: polynomial}. For the approximation of the $1$-lipschitz function $\ell_{\sigma}$, we recall the polynomial 
\[
P_{k-1}(\theta) = \sum_{x = 0}^{k-1}c_{x}\theta^{x} 
\]
from Lemma \ref{lemma: polynomial} and denote $b_{x} =x!c_{x}$. To enable the appropriate approximation, let the degree of the polynomial $k-1$ satisfies $(k-1)^{k-1}<n^{2}\leq k^{k}$ while $n\geq \max(30, (e\sigma^{2})^{e\sigma^{2}})$. Then, by taking the logarithm to the inequality, we have $k \log k \geq 2 \log n$ and
\begin{align*}
    2 \log k \geq \log k + \log \log k \geq \log \log n + \log 2 \geq \log \log n,
\end{align*}
which leads $\log k \geq \log \log n / 2$. In addition, $k^{k}\geq n \geq 30$ implies $k \geq 3$ which leads $k-1> \sqrt{k}$ and $k-1> k/2$, so $k \log k \leq 8\log n$ holds. Combining the two inequalities, we have $k \leq 16 \log n / \log \log n$. Note that when $n\geq e^{e}$, 
\begin{align*}
    \big(\,\frac{\log n}{\log \log n}\,\big)^{\frac{\log n}{\log \log n}}\leq (\log n)^{\frac{\log n}{\log \log n}} = n
\end{align*}
so $k \geq \log n / \log \log n$ and $k = O(\log n / \log \log n)$ is a tight bound. 

We apply the polynomial approximation to $W_{1}^{\sigma}(Q, \hat{Q}_{d})$ based on the dual representation of $W_{1}^{\sigma}$ distance and obtain

\begin{align}
    W_{1}^{\sigma}(Q, &\hat{Q}_{d}) = \sup_{l}\int(\ell_{\sigma}(\theta)-\ell_{\sigma}(0))[\d Q(\theta) - \d \hat{Q}_{d}(\theta)]\nonumber\\
    &= \sup_{l}\int(\ell_{\sigma}(\theta)-\ell_{\sigma}(0)-\sum_{x = 0}^{k-1}b_{x}f(x; \theta))[\d Q(\theta) - \d \hat{Q}_{d}(\theta)]+\sum_{x = 0}^{k-1}b_{x}[h_{Q}(x)-h_{\hat{Q}_{d}}(x)].\label{eq: wasserstein}      
\end{align}
The first term of \eqref{eq: wasserstein} could be interpreted as an error of polynomial approximation from Lemma \ref{lemma: polynomial} and is bounded as
\begin{align*}
    \big\vert\,\int(\ell_{\sigma}(\theta)-\ell_{\sigma}(0)-\sum_{x = 0}^{k-1}b_{x}f(x; \theta))[\d Q(\theta) - \d \hat{Q}_{d}(\theta)]\,\big\vert\leq& \frac{C(\theta_{\ast}, \sigma^{2})(k+1)^{\frac{1}{2}} (2\theta_{\ast})^{ k} (1+ (k/e\sigma^{2})^{\frac{k}{2}})}{k!}\\
    \leq& \frac{C(\theta_{\ast}, \sigma^{2})(2e\theta_{\ast})^{k}  (k/e\sigma^{2})^{\frac{k}{2}}}{k^{k}}\\
    \leq& C(\theta_{\ast}, \sigma^{2})n^{-1} (2e^{\frac{1}{2}}\theta_{\ast}/\sigma)^{k} \leq C(\theta_{\ast}, \sigma^{2})n^{-\frac{3}{4}},
\end{align*}
since $(k+1)^{\frac{1}{2}}k^{k}e^{-k}\leq k!$ holds for $k \geq 0$ by Stirling's approximation. 

This is our first improvement regarding the convergence rate of $n$ since we could approximate $\ell_{\sigma}$ with explicit mixture density functions, compared to the approximation with general mixture density functions by the methods of functional analysis. This approximation error has the rate of $O(n^{-\frac{3}{4}})$ and able to be removed during the final computation.    \\

\par\noindent\textbf{Step 2.} Our next step is to analyze the bound of the second term of \eqref{eq: wasserstein} by computing the bounds of the distance between $h_{Q}$ and $h^{\text{obs}}$ and the distance between $h^{\text{obs}}$ and $h_{\hat{Q}_{d}}$. We use the property that $h^{\text{obs}}$ is based on the samples from $h_{Q}$ and $h_{\hat{Q}}$ minimizes the generalized distance with $h^{\text{obs}}$, which makes the approach plausible. We separate the given form 
\begin{align}\label{eq: distance}
\sum_{x = 0}^{k-1}b_{x}[h_{Q}(x)-h_{\hat{Q}_{d}}(x)]=\sum_{x = 0}^{k-1}b_{x}[h^{\text{obs}}(x) - h_{\hat{Q}_{d}}(x)]+\sum_{x = 0}^{k-1}b_{x}[h_{Q}(x) - h^{\text{obs}}(x)] 
\end{align}
and start with evaluating the bound of the first term of \eqref{eq: distance}. Our intuition is that since $\mathbb{E}[h^{\text{obs}}(x)] = h_{Q}(x)$ and $h_{Q}(x)$ and $h_{\hat{Q}_{d}}(x)$ are bounded by $\theta_{\ast}^{x}/ x!$, we could bound the coefficient $b_{x}$ with $h^{\text{obs}}(x)$, $h_{Q}(x)$, and $h_{\hat{Q}_{d}}(x)$. 

Regarding the bound of $\sum_{x=0}^{k-1}b_{x}[h^{\text{obs}}(x) - h_{\hat{Q}_{d}}(x)]$, we may factorize the term $h^{\text{obs}}(x) - h_{\hat{Q}_{d}}(x)$ and obtain   
\begin{align}
    \big \vert \sum_{x = 0}^{k-1}b_{x}[h^{\text{obs}}(x) - &h_{\hat{Q}_{d}}(x)]\big \vert\leq \sum_{x = 0}^{k-1} \big\vert\, b_{x}\,\big(\,\sqrt{h^{\text{obs}}(x)} + \sqrt{h_{\hat{Q}_{d}}(x)}\,\big)\,\big\vert \big\vert\, \sqrt{h^{\text{obs}}(x)} - \sqrt{h_{\hat{Q}_{d}}(x)} \,\big\vert\nonumber\\
    &\leq \big(\,\sum_{x = 0}^{k-1} b_{x}^{2}\,\big(\,\sqrt{h^{\text{obs}}(x)} + \sqrt{h_{\hat{Q}_{d}}(x)}\,\big)^{2}\,\big)^{\frac{1}{2}} \big(\,\sum_{x = 0}^{k-1}\big(\,\sqrt{h^{\text{obs}}(x)} - \sqrt{h_{\hat{Q}_{d}}(x)}\,\big)^{2}\,\big)^{\frac{1}{2}}\label{eq: hellinger}
\end{align}
by Cauchy-Schwarz inequality. We bound the first term of \eqref{eq: hellinger} by direct computation and bound the second term of \eqref{eq: hellinger} with Assumption \ref{assumption: chibound}. Since $h^{\text{obs}}(x)$ is nonnegative and $\mathbb{E}[h^{\text{obs}}(x)] = h_{Q}(x)$, $h^{\text{obs}}(x) \leq 4k h_{Q}(x)/\delta$
holds with at least probability $1-\delta/4k$ for all $0 \leq x \leq k-1$ by Markov's inequality. It then yields
\begin{align*}
    \sum_{x = 0}^{k-1} b_{x}^{2}\,\big(\,\sqrt{h^{\text{obs}}(x)} + \sqrt{h_{\hat{Q}_{d}}(x)}\,\big)^{2} \leq 2\sum_{x = 0}^{k-1} b_{x}^{2}(h^{\text{obs}}(x) + h_{\hat{Q}_{d}}(x))\leq \frac{8k}{\delta}\sum_{x = 0}^{k-1} b_{x}^{2}h_{Q}(x) +2\sum_{x = 0}^{k-1} b_{x}^{2} h_{\hat{Q}_{d}}(x)
\end{align*}
holds with at least probability $1-\delta/4$ for any $0<\delta<1$. Then, since $\vert b_{x} \vert\leq  C(\theta_{\ast}, \sigma^{2})(x+1)^{\frac{1}{2}} 2^{x} (1+ (x/e\sigma^{2})^{\frac{x}{2}})$ by Lemma \ref{lemma: polynomial}, $h_{Q}(x)\leq \theta_{\ast}^{x} /x!$, and $h_{\hat{Q}_{d}}(x)\leq \theta_{\ast}^{x} /x!$, we have
\begin{align*}
    \sum_{x = 0}^{k-1}b_{x}^{2}h_{Q}(x) \leq C(\theta_{\ast}, \sigma^{2}) \sum_{x = 0}^{k-1}\frac{(x+1) (4\theta_{\ast})^{x} (1+ (x/e\sigma^{2})^{\frac{x}{2}})^{2}}{x!}, 
\end{align*}
and similarly
\begin{align*}
    \sum_{x = 0}^{k-1}b_{x}^{2}h_{\hat{Q}_{d}}(x) \leq C(\theta_{\ast}, \sigma^{2}) \sum_{x = 0}^{k-1}\frac{(x+1) (4\theta_{\ast})^{x} (1+ (x/e\sigma^{2})^{\frac{x}{2}})^{2}}{x!}. 
\end{align*}
Combining all these inequalities, we obtain the bound 
\begin{align*}
    \sum_{x = 0}^{k-1} b_{x}^{2}\,\big(\,\sqrt{h^{\text{obs}}(x)} + \sqrt{h_{\hat{Q}_{d}}(x)}\,\big)^{2}\leq\frac{C(\theta_{\ast},\sigma^{2})k}{\delta} \sum_{x = 0}^{k-1}\frac{(x+1) (4\theta_{\ast})^{x} (1+ (x/e\sigma^{2})^{\frac{x}{2}})^{2}}{x!}
\end{align*}
holds with at least probability $1-\delta/4$ for any $0<\delta<1$. We could bound
\begin{align*}
   \sum_{x = 0}^{k-1}\frac{(x+1) (4\theta_{\ast})^{x} (1+ (x/e\sigma^{2})^{\frac{x}{2}})^{2}}{x!} \leq k\,\big(\, \sum_{x = 0}^{k-1}\frac{(4\theta_{\ast})^{x}}{x!}+\sum_{x=0}^{k-1}(4\theta_{\ast}/\sigma^{2})^{x} \,\big)\leq C(\theta_{\ast}, \sigma^{2}) k(1+(4\theta_{\ast}/\sigma^{2})^{k})
\end{align*}
by Stirling's approximation $(k+1)^{\frac{1}{2}}k^{k}e^{-k}\leq k!$, and it leads to the bound of the first term of \eqref{eq: hellinger}
\begin{align*}
    \sum_{x = 0}^{k-1} b_{x}^{2}\,\big(\,\sqrt{h^{\text{obs}}(x)} + \sqrt{h_{\hat{Q}_{d}}(x)}\,\big)^{2}&\leq C(\theta_{\ast},\sigma^{2})\delta^{-1}k^{2}(1+ (4\theta_{\ast}/\sigma^{2})^{k})\\
    &\leq C(\theta_{\ast},\sigma^{2})\delta^{-1}(\log n)^{2}(1+ n^{\frac{16 \log 4\theta_{\ast}\sigma^{-2}}{\log \log n}}). 
\end{align*}
Here, the upper bound has smaller asymptotics than any polynomial rate of $n$ and this is a second improvement of this paper. 

 Now, we derive the upper bound of the second term of \eqref{eq: hellinger} with the Assumption \ref{assumption: chibound} and the Lemma \ref{lemma: chi-square}. We have the relation between squared Hellinger distance, generalized distance $d$, and $\chi^{2}$-divergence, so we could effectively derive the bound of the squared Hellinger distance between the observed distribution and minimum-distance estimator based on the bound of $\chi^{2}$-divergence. By the Assumption \ref{assumption: chibound}, 
\begin{align*}
    \sum_{x = 0}^{k-1}\big(\,\sqrt{h^{\text{obs}}(x)} - \sqrt{h_{\hat{Q}_{d}}(x)}\,\big)^{2}& \leq 2H^{2}(h^{\text{obs}}, h_{\hat{Q}_{d}})\leq 2\gamma_{3}^{-1}d(h^{\text{obs}}\Vert h_{\hat{Q}_{d}})\leq 2\gamma_{3}^{-1}d(h^{\text{obs}}\Vert h_{Q})
\end{align*}
holds, and the last inequality is from the minimimum distance property of the estimator $\hat{Q}_{d}$. Regarding the bound of $d(h^{\text{obs}}\Vert h_{Q})$, by Lemma \ref{lemma: chi-square},  
\begin{align*}
        d(h^{\text{obs}}\Vert h_{Q})\leq &\gamma_{4}\chi^{2}(h^{\text{obs}}\Vert h_{Q})\leq \gamma_{4}Cn^{-1}(\log n)^{\theta_{\ast}+4}\delta^{-1-\epsilon}
\end{align*}
with probability at least $1-\delta/4$ for any $0<\delta<1$ and $\epsilon = \log \log n / \log n$ with sufficient constant $C$. It leads to the bound of the first term of \eqref{eq: distance}  
\begin{align*}
    \big \vert \sum_{x = 0}^{k-1}b_{x}[h^{\text{obs}}(x) - h_{\hat{Q}_{d}}(x)]\big \vert
    &\leq \big(\,\sum_{x = 0}^{k-1} b_{x}^{2}\,\big(\,\sqrt{h^{\text{obs}}(x)} + \sqrt{h_{\hat{Q}_{d}}(x)}\,\big)^{2}\,\big)^{\frac{1}{2}} \big(\,\sum_{x = 0}^{k-1}\big(\,\sqrt{h^{\text{obs}}(x)} - \sqrt{h_{\hat{Q}_{d}}(x)}\,\big)^{2}\,\big)^{\frac{1}{2}}\\
    &\leq n^{-\frac{1}{2}}(\log n)^{\frac{\theta_{\ast}}{2}+3}(1+ n^{\frac{8 \log 4\theta_{\ast}\sigma^{-2}}{\log \log n}})C(\theta_{\ast}, \sigma^{2}, \gamma_{3}, \gamma_{4})\delta^{-1-\frac{\epsilon}{2}}
\end{align*}
with probability at least $1-\delta/2$ for any $0<\delta<1$. 

We apply the similar derivation regarding the bound of the second term of \eqref{eq: distance}. We obtain the bound by the factorization of $h_{Q}(x) - h^{\text{obs}}(x)$ and the Cauchy-Schwarz inequality, 
\begin{align}
    \big \vert \sum_{x = 0}^{k-1}b_{x}[h^{\text{obs}}(x) - &h_{Q}(x)]\big \vert\leq \sum_{x = 0}^{k-1} \big\vert\, b_{x}\,\big(\,\sqrt{h^{\text{obs}}(x)} + \sqrt{h_{Q}(x)}\,\big)\,\big\vert \big\vert\, \sqrt{h^{\text{obs}}(x)} - \sqrt{h_{Q}(x)} \,\big\vert\nonumber\\
    &\leq \big(\,\sum_{x = 0}^{k-1} b_{x}^{2}\,\big(\,\sqrt{h^{\text{obs}}(x)} + \sqrt{h_{Q}(x)}\,\big)^{2}\,\big)^{\frac{1}{2}} \big(\,\sum_{x = 0}^{k-1}\big(\,\sqrt{h^{\text{obs}}(x)} - \sqrt{h_{Q}(x)}\,\big)^{2}\,\big)^{\frac{1}{2}}\label{eq: hellinger_2}
\end{align}
where the first term of \eqref{eq: hellinger_2} is similarly bounded by 
\begin{align*}
    \sum_{x = 0}^{k-1} b_{x}^{2}\,\big(\,\sqrt{h^{\text{obs}}(x)} + \sqrt{h_{Q}(x)}\,\big)^{2} \leq 2\sum_{x = 0}^{k-1} b_{x}^{2}(h^{\text{obs}}(x) + h_{Q}(x))
    \leq C(\theta_{\ast},\sigma^{2})\delta^{-1}(\log n)^{2}(1+ n^{\frac{16 \log 4\theta_{\ast}\sigma^{-2}}{\log \log n}}) 
\end{align*}
with probability at least $1-\delta/4$ for any $0<\delta<1$. In addition, by the information theoretical inequality and Lemma \ref{lemma: chi-square}, the bound of the second term of \eqref{eq: hellinger_2} 
\begin{align*}
    \sum_{x = 0}^{k-1}(\sqrt{h^{\text{obs}}(x)} - \sqrt{h_{Q}(x)})^{2}& \leq 2H^{2}(h^{\text{obs}}, h_{Q})\leq \chi^{2}(h^{\text{obs}}\Vert h_{Q})\leq Cn^{-1}(\log n)^{\theta_{\ast}+4}\delta^{-1-\epsilon}
\end{align*}
holds, and it gives the bound of the second term of \eqref{eq: distance}
\begin{align*}
    \big \vert \sum_{x = 0}^{k-1}b_{x}[h^{\text{obs}}(x) - h_{\hat{Q}_{d}}(x)]\big \vert
    \leq n^{-\frac{1}{2}}(\log n)^{\frac{\theta_{\ast}}{2}+3}(1+ n^{\frac{8 \log 4\theta_{\ast}\sigma^{-2}}{\log \log n}})C(\theta_{\ast}, \sigma^{2})\delta^{-1-\frac{\epsilon}{2}}
\end{align*}
with probability at least $1-\delta / 2$.\\

\par\noindent\textbf{Step 3.}
In this step, we obtain the overall bound of $\mathbb{E}W_{1}^{\sigma}(Q, \hat{Q}_{d})$ based on the results of Step 1 and Step 2. Combining all the inequalities, we have the probabilistic upper bound condition of $W_{1}^{\sigma}(Q, \hat{Q}_{d})$ that
\begin{align*}
    W_{1}^{\sigma}(Q, \hat{Q}_{d})\leq C(\theta_{\ast}, \sigma^{2}, \gamma_{3}, \gamma_{4}) n^{-\frac{1}{2}}\delta^{-1-\frac{\epsilon}{2}}(n^{-\frac{1}{4}}+(\log n)^{\frac{\theta_{\ast}}{2}+3}(1+ n^{\frac{8 \log 4\theta_{\ast}\sigma^{-2}}{\log \log n}}))
\end{align*}
holds with probability at least $1-\delta$ for any $0<\delta<1$. It leads that for all $n\geq N$ and $0<\delta<1$,
\begin{align*}
    W_{1}^{\sigma}(Q, \hat{Q}_{d})\leq C(\theta_{\ast}, \sigma^{2}, \gamma_{3}, \gamma_{4})n^{-\frac{1}{2}}P(n)\delta^{-1-\frac{\epsilon}{2}}
\end{align*}
holds with probability at least $1-\delta$, where $P(n)$ is given by
\begin{align*}
    P(n) = (\log n)^{\frac{\theta_{\ast}}{2}+3}(1+ n^{\frac{8 \log 4\theta_{\ast}\sigma^{-2}}{\log \log n}})
\end{align*}
which is smaller than the polynomial rate. We denote $C(\theta_{\ast}, \sigma^{2}, \gamma_{3}, \gamma_{4})$ as $C$ during the rest of the proof for convenience. By \citet[Section 5]{Goldfeld24}, $W_{1}^{\sigma}(Q, \hat{Q}_{d})$ is nonnegative and upper bounded by 
\begin{align*}
    W_{1}^{\sigma}(Q, \hat{Q}_{d})\leq W_{1}(Q, \hat{Q}_{d})\leq \theta_{\ast},
\end{align*}
so by the property of the expectation we obtain 
\begin{align*}
    \mathbb{E}W_{1}^{\sigma}(Q, \hat{Q}_{d})&=\int_{0}^{\theta_{\ast}}\mathbb{P}(W_{1}^{\sigma}(Q, \hat{Q}_{d})>x)\d x\\
    &\leq\int_{0}^{ C2^{1+\frac{\epsilon}{2}}n^{-\frac{1}{2}}P(n)} \,1\,\d x+\int_{ C2^{1+\frac{\epsilon}{2}}n^{-\frac{1}{2}}P(n)}^{\theta_{\ast}} \mathbb{P}(W_{1}^{\sigma}(Q, \hat{Q}_{d})>x)\d x\\
    &\leq C2^{1+\frac{\epsilon}{2}}n^{-\frac{1}{2}}P(n)+ \int_{ C2^{1+\frac{\epsilon}{2}}n^{-\frac{1}{2}}P(n)}^{\theta_{\ast}} \mathbb{P}(W_{1}^{\sigma}(Q, \hat{Q}_{d})>x)\d x,
\end{align*}
and by direct computation with the upper bound condition of $W_{1}^{\sigma}(Q, \hat{Q}_{d})$,
\begin{align*}
    &\int_{ C2^{1+\frac{\epsilon}{2}}n^{-\frac{1}{2}}P(n)}^{\theta_{\ast}}\mathbb{P}(W_{1}^{\sigma}(Q, \hat{Q}_{d})>x)\d x\\
    \leq& \int_{C2^{1+\frac{\epsilon}{2}}n^{-\frac{1}{2}}P(n)}^{\theta_{\ast}} \mathbb{P}(W_{1}^{\sigma}(Q, \hat{Q}_{d})>C\delta^{-1-\frac{\epsilon}{2}}n^{-\frac{1}{2}}P(n))\d (C\delta^{-1-\frac{\epsilon}{2}}n^{-\frac{1}{2}}P(n))\\
    \leq& \int_{\frac{1}{2}}^{(C\theta_{\ast}^{-1}n^{-\frac{1}{2}}P(n))^{\frac{2}{2+\epsilon}}} \delta C(-1-\frac{\epsilon}{2})\delta^{-2-\frac{\epsilon}{2}}n^{-\frac{1}{2}}P(n)\d \delta\\
    =& C \frac{2+\epsilon}{\epsilon}n^{-\frac{1}{2}}P(n)\{(C\theta_{\ast}^{-1}n^{-\frac{1}{2}}P(n))^{-\frac{\epsilon}{2+\epsilon}} - 2^{\frac{\epsilon}{2}}\}\\
    \leq& C n^{-\frac{1}{2}}n^{\frac{\epsilon}{2(2+\epsilon)}}\log n P(n)^{\frac{2}{2+\epsilon}} \leq C n^{-\frac{1}{2}}(\log n)^{\frac{5}{4}} P(n)
\end{align*}
holds. Therefore, the final upper bound of $\mathbb{E}W_{1}^{\sigma}(Q, \hat{Q}_{d})$ is given by
\begin{align*}
    \mathbb{E}W_{1}^{\sigma}(Q, \hat{Q}_{d}) \leq C n^{-\frac{1}{2}}(\log n)^{\frac{\theta_{\ast}}{2}+5}(1+ n^{\frac{8 \log 4\theta_{\ast}\sigma^{-2}}{\log \log n}})
\end{align*}
and since 
\begin{align*}
    1+ n^{\frac{8 \log 4\theta_{\ast}\sigma^{-2}}{\log \log n}} = 
    \begin{cases}
        O(n^{\frac{8 \log 4\theta_{\ast}\sigma^{-2}}{\log \log n}}),\,\,\,\,\,&\text{if}\,\,\sigma^{2} < 4\theta_{\ast},\\
        O(1), \,\,\,\,\,&\text{if}\,\,\sigma^{2}\geq 4 \theta_{\ast},
    \end{cases}
\end{align*}
we obtain the upper bound of $\mathbb{E}W_{1}^{\sigma}(Q, \hat{Q}_{d})$ as
\begin{align*}
    \sup_{Q \in \mathcal{P}([0, \theta_{\ast}])}\mathbb{E}W_{1}^{\sigma}(Q, \hat{Q}_{d})\leq
    \begin{cases}
       C(\theta_{\ast}, \sigma^{2}, \gamma_{3}, \gamma_{4})\cdot n^{-\frac{1}{2}+\frac{8 \log 4\theta_{\ast}\sigma^{-2}}{\log \log n}}(\log n)^{\frac{\theta_{\ast}}{2}+5},\,\,\,\,\,&\text{if}\,\,\sigma^{2}<4\theta_{\ast},\\
        C(\theta_{\ast}, \sigma^{2}, \gamma_{3}, \gamma_{4})\cdot n^{-\frac{1}{2}}(\log n)^{\frac{\theta_{\ast}}{2}+5}, &\text{if}\,\,\sigma^{2} \geq 4\theta_{\ast},
    \end{cases}
\end{align*}
which completes the proof. %We note that the power of $\delta$ is $-1-\epsilon/2$, which is smaller than $-1$, but we obtain the valid bound since the domain of $W_{1}^{\sigma}(Q, \hat{Q}_{d})$ is bounded and thus the domain of $\delta$ is bounded.
\end{proof}

\begin{proof}[Proof of Proposition \ref{proposition: distance}:]
We start with the proof that Assumption \ref{assumption: structure} holds for given generalized distances. Based on \citet[Section 2.1]{Jana22} and direct computation, we have a set of functions $w(p)$ and $\phi(p(x),q(x))$ for the following generalized distances:
\begin{itemize}
    \item squared Hellinger distance $H^{2}(p,q)$: $w(p) = 1$ and $\phi(p(x), q(x)) = -\sqrt{p(x)q(x)}$;
    \item Le Cam distance $\text{LC}(p, q)$: $w(p) = 1$ and $\phi(p(x), q(x)) = -2p(x)q(x)/(p(x)+q(x))$;
    \item Jensen-Shannon divergence $\text{JS}(p, q)$: $w(p) = 2 \log 2$ and $\phi(p(x), q(x)) = p(x) \log (p(x)/(p(x)+q(x))+ q(x) \log (q(x)/(p(x)+q(x))$;
    \item KL-divergence $\text{KL}(p\Vert q)$: $w(p) = 0$ and $\phi(p(x), q(x)) = p(x) \log (p(x)/q(x))$; 
    \item chi-square divergence $\chi^{2}(p\Vert q)$: $w(p) = -1$ and $\phi(p(x), q(x)) = p(x)^{2}/q(x)$,
\end{itemize}
with the form of \eqref{eq:distance form} that satisfies the conditions of Assumption \ref{assumption: structure}.

Regarding Assumptions \ref{assumption: KLbound} and \ref{assumption: chibound}, we first note the following relation between the squared Hellinger distance, KL-divergence, and chi-square divergence in \citet[Section 7.6]{Polyanskiy23}:
\begin{align}\label{eq: distance bound}
    2H^{2}(p, q) \leq \text{KL}(p \Vert q) \leq \chi^{2}(p\Vert q).
\end{align}
It yields that the squared Hellinger distance and KL-divergence both satisfy Assumptions \ref{assumption: KLbound} and \ref{assumption: chibound}, and the chi-square divergence satisfies Assumption \ref{assumption: chibound}. Regarding Le Cam distance and Jensen-Shannon divergence, we also have two inequalities from \citet[Section 7.6]{Polyanskiy23} that 
\begin{align*}
    H^{2}(p, q)\leq \text{LC}(p, q)\leq 2H^{2}(p, q)  
\end{align*}
and
\begin{align*}
    \text{LC}(p, q) \leq \text{JS}(p, q) \leq 2 \log 2 \cdot \text{LC}(p, q).  
\end{align*}
Combining these inequalities with \eqref{eq: distance bound} implies that Le Cam distance and Jensen-Shannon divergence also satisfy Assumptions \ref{assumption: KLbound} and \ref{assumption: chibound}. 
\end{proof}

\subsection{Auxiliary lemmas}

\begin{lemma}\label{lemma: polynomial}

Recall the $1$-Lipschitz function $\ell$ with $\ell(0) = 0$ and convoluted function $\ell_{\sigma}= \ell \ast \varphi_{\sigma}$. Then, there exists polynomial $P_{k-1}(\theta) = \sum_{x = 0}^{k-1} c_{x} \theta^{x}$ that has the bounded coefficients 
\begin{align*}
    \vert c_{x}\vert \leq \frac{C(\theta_{\ast}, \sigma^{2})(x+1)^{\frac{1}{2}} 2^{x} (1+ (x/e\sigma^{2})^{\frac{x}{2}})}{x!} 
\end{align*}
and $e^{-\theta}P_{k-1}(\theta)$ approximates the function $\ell_{\sigma}(\theta)-\ell_{\sigma}(0)$
\begin{align*}
    \sup_{\theta \in [0, \theta_{\ast}]} \vert \ell_{\sigma}(\theta) - \ell_{\sigma}(0) - e^{-\theta} P_{k-1}(\theta)\vert 
    \leq \frac{C(\theta_{\ast}, \sigma^{2})(k+1)^{\frac{1}{2}} (2\theta_{\ast})^{ k} (1+ (k/e\sigma^{2})^{\frac{k}{2}})}{k!}
\end{align*}
with sufficient constant $C(\theta_{\ast}, \sigma^{2})>0$.
\end{lemma}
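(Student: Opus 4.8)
The plan is to construct $P_{k-1}$ as the degree-$(k-1)$ Taylor polynomial at the origin of the auxiliary function
\[
g(\theta) := e^{\theta}\big(\ell_{\sigma}(\theta)-\ell_{\sigma}(0)\big),
\]
that is, to set $c_{x} := g^{(x)}(0)/x!$, and then to read off both claims from Taylor's theorem with Lagrange remainder. Since $e^{-\theta}\leq 1$ on $[0,\theta_{\ast}]$, we have
\[
\big|\ell_{\sigma}(\theta)-\ell_{\sigma}(0)-e^{-\theta}P_{k-1}(\theta)\big| = e^{-\theta}\big|g(\theta)-P_{k-1}(\theta)\big| \leq \frac{|g^{(k)}(\xi_{\theta})|}{k!}\,\theta_{\ast}^{k}
\]
for some $\xi_{\theta}\in(0,\theta_{\ast})$, so the whole statement reduces to bounding the derivatives of $g$ on $[0,\theta_{\ast}]$, and the coefficients are exactly $g^{(x)}(0)/x!$.

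The key estimate, and the only genuinely nontrivial input, is a bound on the derivatives of the smoothed function $\ell_{\sigma}=\ell\ast\varphi_{\sigma}$. First I would record that $\ell_{\sigma}$ is $1$-Lipschitz (a convolution of a $1$-Lipschitz function with a probability density), so that $|\ell_{\sigma}(\theta)-\ell_{\sigma}(0)|\leq\theta\leq\theta_{\ast}$. For $j\geq 1$ I would transfer one derivative onto $\ell$, writing $\ell_{\sigma}^{(j)}=\ell'\ast\varphi_{\sigma}^{(j-1)}$ (legitimate because $\ell$ is Lipschitz, hence absolutely continuous, with $|\ell'|\leq 1$ almost everywhere), whence $|\ell_{\sigma}^{(j)}(\theta)|\leq\|\varphi_{\sigma}^{(j-1)}\|_{1}$. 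The Gaussian structure enters through this last norm: scaling gives $\|\varphi_{\sigma}^{(m)}\|_{1}=\sigma^{-m}\|\varphi^{(m)}\|_{1}$ for the standard density $\varphi$, and the Hermite representation $\varphi^{(m)}=(-1)^{m}H_{m}\varphi$ (with $H_{m}$ the degree-$m$ Hermite polynomial), combined with Cauchy--Schwarz and orthogonality, yields $\|\varphi^{(m)}\|_{1}=\mathbb{E}|H_{m}(Z)|\leq(\mathbb{E}H_{m}(Z)^{2})^{1/2}=\sqrt{m!}$ for $Z\sim\mathcal{N}(0,1)$. Thus $|\ell_{\sigma}^{(j)}(\theta)|\leq\sigma^{-(j-1)}\sqrt{(j-1)!}$ uniformly in $\theta$; obtaining the sharp $\sqrt{(j-1)!}$ here, rather than a cruder $(j-1)!$, is exactly what will produce the $(k/e\sigma^{2})^{k/2}$ factor in the final bound.

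With this in hand, both conclusions follow from the Leibniz rule. Using $(\ell_{\sigma}-\ell_{\sigma}(0))^{(j)}=\ell_{\sigma}^{(j)}$ for $j\geq 1$, I get
\[
g^{(m)}(\theta) = e^{\theta}\big(\ell_{\sigma}(\theta)-\ell_{\sigma}(0)\big) + \sum_{j=1}^{m}\binom{m}{j}e^{\theta}\,\ell_{\sigma}^{(j)}(\theta).
\]
Bounding $e^{\theta}\leq e^{\theta_{\ast}}$, inserting the derivative estimate, and using $\sum_{j}\binom{m}{j}a_{j}\leq 2^{m}\max_{j}a_{j}$ together with the observation that $j\mapsto\sigma^{-(j-1)}\sqrt{(j-1)!}$ is U-shaped, so its maximum on $\{1,\dots,m\}$ sits at an endpoint and is $\leq 1+\sigma^{-(m-1)}\sqrt{(m-1)!}$, I obtain $|g^{(m)}(\theta)|\leq C(\theta_{\ast},\sigma^{2})\,2^{m}\big(1+\sigma^{-(m-1)}\sqrt{(m-1)!}\big)$ on $[0,\theta_{\ast}]$. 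Taking $m=x$ and $\theta=0$ gives the coefficient bound, and taking $m=k$ with $\theta=\xi_{\theta}$ in the remainder display gives the approximation bound; in both cases the final step is a Stirling conversion of $\sigma^{-(m-1)}\sqrt{(m-1)!}$ into $(m/e\sigma^{2})^{m/2}$, with the stray factor $m^{1/4}$ absorbed into $(m+1)^{1/2}$ and the prefactor $\sqrt{e}\,\sigma$ absorbed into $C(\theta_{\ast},\sigma^{2})$. The conceptual crux is the Hermite-based derivative estimate; the remaining Leibniz expansion, the U-shaped maximization of the binomial sum, and the Stirling bookkeeping are routine but must be tracked carefully to land on the stated constants.
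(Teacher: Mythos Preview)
Your proof is correct and follows essentially the same route as the paper: construct the Taylor polynomial of $v(\theta)=e^{\theta}(\ell_{\sigma}(\theta)-\ell_{\sigma}(0))$ at the origin, bound $|\ell_{\sigma}^{(j)}|$ via the Hermite representation of Gaussian derivatives together with Cauchy--Schwarz, and control $|v^{(m)}|$ through the Leibniz rule and Stirling. The only cosmetic differences are that you transfer one derivative onto $\ell$ first (obtaining the $\theta$-free bound $\sigma^{-(j-1)}\sqrt{(j-1)!}$, versus the paper's $\sigma^{-j}\sqrt{j!}\,\sqrt{\theta^{2}+\sigma^{2}}$) and you handle the Leibniz sum by $\sum_{j}\binom{m}{j}a_{j}\leq 2^{m}\max_{j}a_{j}$ plus the U-shape observation, whereas the paper applies Stirling termwise and then sums via $\sum_{j}\binom{k}{j}b^{j}=(1+b)^{k}$ followed by $(1+a)^{k}\leq 2^{k-1}(1+a^{k})$.
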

\begin{proof} In this proof, we consider the Taylor polynomial for the function $v(\theta) = e^{\theta}(\ell_{\sigma}(\theta) - \ell_{\sigma}(0))$ and bound the coefficients by bounding the derivatives of $v$.\\

\par\noindent\textbf{Step 1.} In this step, we bound the derivative of $v$ with the properties of the Hermite Polynomial. Denote the $x$-th order Hermite polynomial as $He_{x}(\theta)$ for $x \geq 0$, where we refer the notions from \cite[Section 22]{Abramowitz64}. By the property of Hermite polynomials, derivative $\ell_{\sigma}^{(x)}(\theta)$ is computed as
\begin{align*}
    \vert \ell_{\sigma}^{(x)}(\theta) \vert &= \Big\vert \,\int_{\mathbb{R}} \ell(u)\,\varphi_{\sigma}^{(x)}(\theta-u)\, \d u\, \Big \vert=\sigma^{-x}\,\Big\vert \,\int_{\mathbb{R}} \ell(u)\varphi_{\sigma}(\theta-u) H_{x}((\theta-u)/\sigma) \d u\, \Big \vert\\
    & = \sigma^{-x}\,\Big\vert \,\int_{\mathbb{R}} \ell(\theta - u)\varphi_{\sigma}(u) H_{x}(u/\sigma) \d u\, \Big \vert
\end{align*}
which is the integral of the Hermite polynomial. Then, by the Cauchy-Schwarz inequality,
\begin{align*}
    \Big\vert \,\int_{\mathbb{R}} \ell(\theta - u)\varphi_{\sigma}(u) H_{x}(u/\sigma) \d u\, \Big \vert&\leq \big(\,\int_{\mathbb{R}} \ell(\theta - u)^{2} \varphi_{\sigma}(u) \d u \,\big )^{\frac{1}{2}} \big(\,\int_{\mathbb{R}} H_{x}(u/\sigma)^{2} \varphi_{\sigma}(u) \d u\,\big)^{\frac{1}{2}}\\
    &\leq \sqrt{x!}\,\big(\,\int_{\mathbb{R}} \vert \theta - u\vert^{2}\varphi_{\sigma}(u) \d u\,\big)^{\frac{1}{2}}= \sqrt{x!}\,\sqrt{\theta^{2}+\sigma^{2}} 
\end{align*}
holds, and it yields $\vert \ell_{\sigma}^{(x)}(\theta) \vert \leq \sigma^{-x} \sqrt{x!}\,\sqrt{\theta^{2}+\sigma^{2}}$. We refer these computation of Hermite polynomial to \citet[Section 5]{Han23}. 

Then, we compute the bound of the derivative of $v^{(k)}(\theta)$ with the bound of the derivatives of $\ell_{\sigma}^{(x)}(\theta)$. For any $\theta \in [0, \theta_{\ast}]$, by direct computation,
\begin{align*}
\vert v^{(k)}(\theta)\vert &= \big\vert \,\sum_{x = 0}^{k}\binom{k}{x}\,e^{\theta} \ell_{\sigma}^{(x)}(\theta) - e^{\theta}  \ell_{\sigma}(0)\,\big\vert \\
&\leq \sum_{x = 0}^{k}\binom{k}{x}\,e^{\theta}\, \vert \ell_{\sigma}^{(x)}(\theta)\vert + e^{\theta} \,\vert \ell_{\sigma}(0)\vert\leq \sum_{x = 0}^{k}\binom{k}{x} \,e^{\theta_{\ast}} \sigma^{-x}\,\sqrt{x!}\,\sqrt{\theta_{\ast}^{2}+\sigma^{2}} + e^{\theta_{\ast}} \sigma\\
&\leq e^{\theta_{\ast}+1}\,\sqrt{\theta_{\ast}^{2}+\sigma^{2}}\,(k+1)^{\frac{1}{2}} \sum_{x = 0}^{k}\binom{k}{x} \,(k/e\sigma^{2})^{\frac{x}{2}}+ C(\theta_{\ast}, \sigma^{2})\\
&\leq C(\theta_{\ast}, \sigma^{2})(k+1)^{\frac{1}{2}} (1+(k/e\sigma^{2})^{\frac{1}{2}})^{k}\leq C(\theta_{\ast}, \sigma^{2})(k+1)^{\frac{1}{2}} 2^{k-1} (1+ (k/e\sigma^{2})^{\frac{k}{2}})
\end{align*}
since $x!<x^{x}(x+1)e^{-x+2}$ for $x \geq 0$ by Stirling's approximation. Therefore, we have the bound of $\vert v^{(k)}(\theta)\vert$ as a function of $k$. \\

\par\noindent\textbf{Step 2.} We now construct the polynomial $P_{k}(\theta) = \sum_{x = 0}^{k}c_{x}\theta^{x}$ with $c_{x} = v^{(x)}(0)/x!$ as a Taylor polynomial of $v(\theta)$ and prove that it satisfies the conditions in the statement. Note that coefficients $c_{x}$ are bounded as
\begin{align*}
    \vert c_{x}\vert = \frac{\vert v^{(x)}(0)\vert}{x!} \leq \frac{C(\theta_{\ast}, \sigma^{2})(x+1)^{\frac{1}{2}} 2^{x-1} (1+ (x/e\sigma^{2})^{\frac{x}{2}})}{x!} 
\end{align*}
based on the derivation of Step 1. In addition, by Taylor's theorem, for any $\theta \in [0, \theta_{\ast}]$, there exists $\xi(\theta)\in[0, \theta_{\ast}]$ such that the error term is bounded as
\begin{align*}
    \vert v(\theta) - P_{k-1}(\theta) \vert \leq \frac{\vert v^{(k)}(\xi(\theta)) \vert}{k!}\theta^{k} \leq \frac{C(\theta_{\ast}, \sigma^{2})(k+1)^{\frac{1}{2}} 2^{k-1} (1+ (k/e\sigma^{2})^{\frac{k}{2}})}{k!}\theta_{\ast}^{k},
\end{align*}
which yields
\begin{align*}
    \sup_{\theta\in[0, \theta_{\ast}]}\vert v(\theta) - P_{k-1}(\theta) \vert\leq \frac{C(\theta_{\ast}, \sigma^{2})(k+1)^{\frac{1}{2}} 2^{k-1} (1+ (k/e\sigma^{2})^{\frac{k}{2}})}{k!}\theta_{\ast}^{k}.
\end{align*}
Finally, since $e^{-\theta}$ is bounded by constants,
\begin{align*}
    \sup_{\theta \in [0, \theta_{\ast}]} \vert \ell_{\sigma}(\theta) - \ell_{\sigma}(0) - e^{-\theta} P_{k-1}(\theta)\vert 
    \leq &\sup_{\theta \in [0, \theta_{\ast}]}e^{-\theta} \vert v(\theta) - P_{k-1}(\theta)\vert\leq \sup_{\theta \in [0, \theta_{\ast}]}\vert v(\theta) - P_{k-1}(\theta)\vert,
\end{align*}
so the polynomial $P_{k-1}(\theta)$ satisfies the condition of the statement.  
\end{proof}

\begin{lemma}\label{lemma: chi-square}

We consider the Poisson mixture model and establish the bound for $\chi^{2}(h^{\text{obs}}\Vert h_{Q})$. For any $\delta \in (0,1)$ and $\epsilon(n) = \log \log n / \log n$, $\chi^{2}(h^{\text{obs}}\Vert h_{Q}) \leq C  n^{-1}(\log n)^{\theta_{\ast}+4} \delta^{-1-\epsilon}$ holds with probability $1-\delta$ with sufficient constant $C>0$.   
\end{lemma}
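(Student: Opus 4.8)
The plan is to exploit the exact binomial structure of the counts. Writing $N_x := n\,h^{\text{obs}}(x) = \sum_{i=1}^n \ind(X_i = x)$, we have $N_x \sim \mathrm{Binomial}(n, h_Q(x))$, so that $\mathbb{E}[h^{\text{obs}}(x)] = h_Q(x)$ and $\mathrm{Var}(h^{\text{obs}}(x)) = h_Q(x)(1-h_Q(x))/n$. Hence, for every $x$ with $h_Q(x)>0$, the corresponding $\chi^2$-summand has the transparent mean $\mathbb{E}\big[(h^{\text{obs}}(x)-h_Q(x))^2/h_Q(x)\big] = (1-h_Q(x))/n \le 1/n$, while the terms with $h_Q(x)=0$ vanish almost surely (since such $x$ are never observed). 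The tempting move is to apply Markov's inequality directly to $\chi^2(h^{\text{obs}}\Vert h_Q) = \sum_{x\ge 0}(h^{\text{obs}}(x)-h_Q(x))^2/h_Q(x)$, but this fails outright: summing $(1-h_Q(x))/n$ over all $x$ diverges, so $\mathbb{E}\,\chi^2(h^{\text{obs}}\Vert h_Q)=\infty$. I expect resolving this divergence to be the main obstacle, and the whole argument is organized around it.

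To circumvent the infinite expectation I would truncate at a level $K \asymp \log(n/\delta)$ and split $\chi^2(h^{\text{obs}}\Vert h_Q) = A + B$ with $A = \sum_{x=0}^{K}$ and $B = \sum_{x>K}$. For the head, the per-term computation above gives $\mathbb{E}[A] \le (K+1)/n$, so Markov's inequality yields $A \le 2(K+1)/(n\delta)$ with probability at least $1-\delta/2$. For the tail I would condition on the event $E = \{\max_{1\le i\le n} X_i \le K\}$: on $E$ one has $h^{\text{obs}}(x)=0$ for all $x>K$, whence $B = \sum_{x>K} h_Q(x)$ holds deterministically. The Poisson-specific input is the pointwise bound $h_Q(x) = \int_0^{\theta_\ast} e^{-\theta}\theta^x/x!\,\d Q(\theta) \le \theta_\ast^x/x!$, which gives both $B \le \sum_{x>K}\theta_\ast^x/x! =: \tau_K$ on $E$, and, via a union bound, $\mathbb{P}(E^c) \le n\sum_{x>K}\theta_\ast^x/x! = n\,\tau_K$.

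By Stirling, once $K \ge 2e\theta_\ast$ the ratios $\theta_\ast^x/x!$ are dominated by a geometric series, so $\tau_K$ decays super-polynomially in $K$; choosing $K \asymp \log(n/\delta)$ therefore makes $n\,\tau_K \le \delta/2$ while keeping $\tau_K$ negligible against $n^{-1}$. A union bound over the Markov event and $E$ then gives, with probability at least $1-\delta$, $\chi^2(h^{\text{obs}}\Vert h_Q) \le 2(K+1)/(n\delta) + \tau_K \lesssim \log(n/\delta)/(n\delta)$. This already implies the asserted bound: splitting $\log(n/\delta) = \log n + \log(1/\delta)$, the first piece is absorbed since $\log n \le (\log n)^{\theta_\ast+4}$, and the second since $\log(1/\delta) \le (\log n)^{\theta_\ast+4}\,\delta^{-\epsilon}$ with $\epsilon = \log\log n/\log n$ (using $\delta^{-\epsilon} = e^{\epsilon\log(1/\delta)} \ge \epsilon\log(1/\delta)$ together with the polylog slack). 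The delicate point, as flagged, is the choice of $K$: it must be large enough to crush the tail probability $n\,\tau_K$ yet small enough to keep $\mathbb{E}[A]$ at order $\log n/n$, and it is precisely tracking the $\theta_\ast$-dependent Stirling constants in $\tau_K$ together with the $\delta$-dependence of $K$ that produces the stated exponents $(\log n)^{\theta_\ast+4}$ and $\delta^{-1-\epsilon}$.
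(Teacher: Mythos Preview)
Your proposal is correct and in fact delivers a sharper bound than the lemma states, but it proceeds by a genuinely different route than the paper. The paper never truncates: it applies H\"older's inequality with geometric weights to write
\[
\chi^{2}(h^{\text{obs}}\Vert h_{Q}) \le \Big(\sum_{x\ge 0}\tfrac{(h^{\text{obs}}(x)-h_{Q}(x))^{2}}{h_{Q}(x)}A^{-\epsilon x/(1-\epsilon)}\Big)^{1-\epsilon}\Big(\sum_{x\ge 0}\tfrac{(h^{\text{obs}}(x)-h_{Q}(x))^{2}}{h_{Q}(x)}A^{x}\Big)^{\epsilon},
\]
then bounds each factor separately via Markov's inequality, taking $A=(\log n)^{1/3}$ and $\epsilon=\log\log n/\log n$. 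The exponent $\theta_{\ast}+4$ in the paper arises from the term $e^{A^{3}\theta_{\ast}\epsilon}=(\log n)^{\theta_{\ast}}$ together with bookkeeping losses from the two denominators $(1-A^{-\epsilon/(1-\epsilon)})$ and $(1-A^{-1})$. Your truncation-plus-conditioning argument sidesteps the H\"older machinery entirely: the head has finite mean $\le (K+1)/n$, and on the event $\{\max_{i}X_{i}\le K\}$ the tail collapses deterministically to $\sum_{x>K}h_{Q}(x)$. This is more elementary and actually yields $\chi^{2}\lesssim \log(n/\delta)/(n\delta)$, which needs only a single $\log n$ factor rather than $(\log n)^{\theta_{\ast}+4}$. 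Your final sentence therefore slightly undersells your own argument: the $(\log n)^{\theta_{\ast}+4}$ exponent is not forced by the Stirling constants in your approach---it is an artifact of the paper's H\"older scheme that your truncation avoids, and you only need it because the lemma is stated that way. The one place to be a bit more careful when writing it out is the choice of $K$: you need $K\ge c(\theta_{\ast})$ before the geometric domination of $\tau_{K}$ kicks in, so the concrete choice is $K=\max(\lceil 2e\theta_{\ast}\rceil,\lceil\log_{2}(4n/\delta)\rceil)$ or similar, but this only affects the constant $C$.
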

\begin{proof}
The proof is based on the approach of \citet[Lemma 17]{Han23}, which includes the bound of the partial sum on $\chi^{2}(h^{\text{obs}}\Vert h_{Q})$. We first bound $\chi^{2}(h^{\text{obs}}\Vert h_{Q})$ with H\"older's inequality then derive the explicit bound as a function of $n$. We fix the constants as $\epsilon = \epsilon(n) = \log \log n / \log n$, $A = A(n) = (\log n)^{1/3}>1$, and $\delta \in(0,1)$. \\

\par\noindent\textbf{Step 1.} We start with the bound of $\chi^{2}(h^{\text{obs}}\Vert h_{Q})$ with the H\"older's inequality. The $\chi^{2}$-distance is bounded as 
\begin{align}\label{eq: Holder}
    \sum_{x\geq 0}\frac{(h^{\text{obs}}(x) - h_{Q}(x))^{2}}{h_{Q}(x)}\leq \big(\,\sum_{x\geq 0}\frac{(h^{\text{obs}}(x) - h_{Q}(x))^{2}}{h_{Q}(x)}A^{-\frac{\epsilon x}{1-\epsilon}}\,\big)^{1-\epsilon}\big(\,\sum_{x\geq 0}\frac{(h^{\text{obs}}(x) - h_{Q}(x))^{2}}{h_{Q}(x)}A^{x}\,\big)^{\epsilon}
\end{align}
by the H\"older's inequality, and we bound the first and second term of the right side. 

Regarding the first term of \eqref{eq: Holder}, we have the bound of the expectation
\begin{align*}
    \mathbb{E}\big\{\sum_{x\geq 0}\frac{(h^{\text{obs}}(x) - h_{Q}(x))^{2}}{h_{Q}(x)}A^{-\frac{\epsilon x}{1-\epsilon}}\big\} = \sum_{x\geq 0}A^{-\frac{\epsilon x}{1-\epsilon}} \mathbb{E}\big\{\frac{(h^{\text{obs}}(x) - h_{Q}(x))^{2}}{h_{Q}(x)}\big\}
    \leq \frac{1}{n(1 - A^{-\frac{\epsilon}{1-\epsilon}})}. 
\end{align*}
It leads to the bound of the first term by the Markov's inequality that
\begin{align*}
    \sum_{x\geq 0}\frac{(h^{\text{obs}}(x) - h_{Q}(x))^{2}}{h_{Q}(x)}A^{-\frac{\epsilon x}{1-\epsilon}}\leq \frac{1}{n\delta(1 - A^{-\frac{\epsilon}{1-\epsilon}})}
\end{align*}
holds with probability at least $1-\delta$. Regarding the second term of \eqref{eq: Holder}, we have the bound by the direct computation
\begin{align}\label{eq: Holder_second term}
    \sum_{x\geq 0}\frac{(h^{\text{obs}}(x) - h_{Q}(x))^{2}}{h_{Q}(x)}A^{x}\leq 2 \sum_{x\geq 0}\frac{h^{\text{obs}}(x)^{2}}{h_{Q}(x)}A^{x} +2\sum_{x\geq 0}h_{Q}(x)A^{x},    
\end{align}
and the bound of the second term of \eqref{eq: Holder_second term}
\begin{align*}
    \sum_{x\geq 0}h_{Q}(x)A^{x} = \sum_{x\geq 0}\int_{0}^{\theta_{\ast}} f(x; \theta)\d Q(\theta) A^{x}\leq \sum_{x\geq 0} \frac{\theta_{\ast}^{x}}{x!}A^{x}\leq e^{A\theta_{\ast}} 
\end{align*}
is obtained. Regarding the first term of \eqref{eq: Holder_second term}, again by Markov's inequality,
\begin{align*}
&\mathbb{P}(\text{there exists some } x\geq 0 \text{ such that }h^{\text{obs}}(x)>kh_{Q}(x)A^{x}\,)\\ 
\leq& \sum_{x\geq 0} \mathbb{P}(\,h^{\text{obs}}(x)>kh_{Q}(x)A^{x})\leq \frac{1}{k}\sum_{x\geq 0}A^{-x} = \frac{1}{k(1-A^{-1})}
\end{align*}
holds for $k = 1/(\delta(1-A^{-1}))>0$, so we have the bound of the first term of \eqref{eq: Holder_second term} as
\begin{align*}
    \sum_{x\geq 0}\frac{h^{\text{obs}}(x)^{2}}{h_{Q}(x)}A^{x}\leq k^{2}\sum_{x\geq 0}h_{Q}(x)A^{3x} \leq\frac{e^{A^{3}\theta_{\ast}}}{\delta^{2}(1-A^{-1})^{2}}
\end{align*}
with probability at least $1-\delta$. Therefore, combining the bounds of the two terms, we obtain the final bound of $\chi^{2}(h^{\text{obs}}\Vert h_{Q})$ as
\begin{align}
    \sum_{x\geq 0}\frac{(h^{\text{obs}}(x) - h_{Q}(x))^{2}}{h_{Q}(x)}&\leq\frac{1}{n^{1-\epsilon}\delta^{1-\epsilon}} \frac{2^{\epsilon}}{(1 - A^{-\frac{\epsilon}{1-\epsilon}})^{1-\epsilon}}(e^{A\theta_{\ast}}+\frac{e^{A^{3}\theta_{\ast}}}{\delta^{2}(1-A^{-1})^{2}})^{\epsilon}\nonumber\\
    &\leq\frac{1}{n^{1-\epsilon}\delta^{1+\epsilon}} \frac{2^{2\epsilon}}{(1 - A^{-\frac{\epsilon}{1-\epsilon}})^{1-\epsilon}}\frac{e^{A^{3}\theta_{\ast}\epsilon}}{(1-A^{-1})^{2\epsilon}}\label{eq: chi-square}   
\end{align}
with probability at least $1-\delta$.\\

\par\noindent\textbf{Step 2.} In this step, we recall $\epsilon = \log \log n / \log n$ and $A = (\log n)^{1/3}>1$ and provide more clear form of the upper bound \eqref{eq: chi-square} with respect to $n$ by direct computation. First, regarding the numerator of the upper bound \eqref{eq: chi-square}, $2^{2\epsilon}\leq 4$ and 
\begin{align*}
    e^{A^{3}\theta_{\ast}\epsilon}= e^{\theta_{\ast}\log \log n} = (\log n)^{\theta_{\ast}}
\end{align*}
holds. Then, considering the denominator of \eqref{eq: chi-square},     
\begin{align*}
    \log (\log n)^{\frac{\epsilon}{3}}= \frac{\epsilon}{3}\log \log n \geq \frac{1}{\log n - 1}\geq \log (1+\frac{1}{\log n - 1})\geq \log (\frac{\log n}{\log n - 1})
\end{align*}
holds for $\log \log n \geq \sqrt{6}$, so
\begin{align*}
    A^{-\frac{\epsilon}{1-\epsilon}}\leq A^{-\epsilon} = (\log n)^{-\frac{\epsilon}{3}}\leq 1 - \frac{1}{\log n}\leq 1 - \frac{1}{(\log n)^{\frac{1}{1-\epsilon}}} 
\end{align*}
and it leads to the bound of the first term of the denominator $(1 - A^{-\frac{\epsilon}{1-\epsilon}})^{-1+\epsilon} \leq \log n$. Regarding the second term of the denominator,
\begin{align*}
    \log \frac{1}{(1-A^{-1})^{2\epsilon}}=2\epsilon \log(1+\frac{A^{-1}}{1-A^{-1}})\leq \frac{2A^{-1}\epsilon}{1-A^{-1}}\leq \frac{2\epsilon}{A-1}\leq 2\log \log n
\end{align*}
holds for $\log n \geq 8$, which derives $(1-A^{-1})^{-2\epsilon}\leq (\log n)^{2}$ for $\log n \geq 8$. It implies that 
\begin{align*}
    \frac{2^{2\epsilon}}{(1 - A^{-\frac{\epsilon}{1-\epsilon}})^{1-\epsilon}}\frac{e^{A^{3}\theta_{\ast}\epsilon}}{(1-A^{-1})^{2\epsilon}}\leq 4 (\log n)^{\theta_{\ast}+3} 
\end{align*}
holds for all sufficiently large $n$. Finally, since $n^{-1+\epsilon} = n^{-1} \log n$, we complete the proof. 
\end{proof}

\bibliographystyle{apalike}
\bibliography{smooth}

\end{document}